\newcommand{\dif}{\,\text{d}}
\renewcommand{\phi}{\varphi}
\newcommand{\timejump}[1]{\lbrack\!\lbrack#1\rbrack\!\rbrack} 
\newcommand*\bigcdot{\mathpalette\bigcdot@{.5}}
\newcommand*\bigcdot@[2]{\mathbin{\vcenter{\hbox{\scalebox{#2}{$\m@th#1\bullet$}}}}}
\newcommand{\tightoverset}[2]{%
  \mathop{#2}\limits^{\vbox to -.45ex{\kern-0.75ex\hbox{$#1$}\vss}}}
\DeclareRobustCommand\widecheck[1]{{\mathpalette\@widecheck{#1}}}
\def\@widecheck#1#2{%
    \setbox\z@\hbox{\m@th$#1#2$}%
    \setbox\tw@\hbox{\m@th$#1%
       \widehat{%
          \vrule\@width\z@\@height\ht\z@
          \vrule\@height\z@\@width\wd\z@}$}%
    \dp\tw@-\ht\z@
    \@tempdima\ht\z@ \advance\@tempdima2\ht\tw@ \divide\@tempdima\thr@@
    \setbox\tw@\hbox{%
       \raise\@tempdima\hbox{\scalebox{1}[-1]{\lower\@tempdima\box
\tw@}}}%
    {\ooalign{\box\tw@ \cr \box\z@}}}
\newtheorem{Remark}[equation]{Remark}
\newtheorem{theorem}[equation]{Theorem}
\newtheorem{lemma}[equation]{Lemma}
\numberwithin{equation}{section}
\title[A quasi-optimal test norm for a DPG discretization of convection-diffusion]{A robust quasi-optimal test norm for a DPG discretization of the convection-diffusion equation}
\author{Stephen Metcalfe}
\address{Department of Mechanical Engineering, McGill University, Montr{\'e}al, H3A 0C3, Canada}
\email{smetcalfephd@gmail.com}
\author{Siva Nadarajah}
\address{Department of Mechanical Engineering, McGill University, Montr{\'e}al, H3A 0C3, Canada}
\email{siva.nadarajah@mcgill.ca}
\begin{document}

\begin{abstract}

In this work, we propose a new quasi-optimal test norm for a discontinuous Petrov-Galerkin (DPG) discretization of the ultra-weak formulation of the convection-diffusion equation. We prove theoretically that the proposed test norm leads to bounds between the target norm and the energy norm induced by the test norm which are robust with respect to the diffusion parameter in the solution and gradient components and have favorable scalings in the trace components. We conclude with numerical experiments to confirm our theoretical results.

\end{abstract}

\keywords{Discontinuous Petrov-Galerkin (DPG), Convection-Diffusion, Robust Test Norm}

\subjclass[2010]{65N12, 65N22}

\maketitle

\section{Introduction}

Let $\displaystyle \Omega \subset \mathbb{R}^d$ be a bounded polyhedral domain and consider the problem
\begin{equation}\label{model_primal}
\begin{aligned}
\nabla \bigcdot ({\bf a}u - \varepsilon \nabla u) &= f \qquad && \text{in }  \Omega,  \\ u &=0 \mbox{ } && \text{on }  \Gamma_D,  \\ ({\bf a}u - \varepsilon \nabla u) \bigcdot \bm{n} &=0 \mbox{ } && \text{on }  \Gamma_N,
\end{aligned}
\end{equation}
with $\bm{n}$ denoting the outward unit normal to the boundary $\partial\Omega$ and where $\Gamma_D$ and $\Gamma_N$ are two disjoint open subsets satisfying $\overline{\Gamma}_D \cup \overline{\Gamma}_N = \partial\Omega$. Here, the data satisfies $\varepsilon \in \mathbb{R}^+$, ${\bf a} \in [C(\bar{\Omega})]^d$ with $C(\bar{\Omega}) \ni \nabla \bigcdot {\bf a} \geq 0$ and $f \in H^{-1}(\Omega)$. 

The solution to problem \eqref{model_primal} is difficult to approximate numerically due to the presence of \emph{boundary/interior layers} {\bf--} areas of steep solution gradients with a width dependent upon $\varepsilon$. It is well known that regardless of what formulation of problem \eqref{model_primal} is used that discretizing it using the standard finite element method results in spurious oscillations occuring in the numerical solution near the layers until a sufficient number of elements have been added locally with the problem worsening as $\varepsilon \to 0^+$; for this reason, multiple stable schemes have been developed for problem \eqref{model_primal} over the years. The discontinuous Galerkin (DG) method originally developed by Reed/Hill for the neutron transport equation \cite{RH73} has been successfully applied to the convection-diffusion equation through a variety of different stable discretizations \cite{A82, BO99, DSW04, RWG99}. The so-called hybridizable discontinuous Galerkin (HDG) methods are another popular class of schemes used to discretize the convection-diffusion equation, cf., \cite{QS16, CDGRS09, NPC09, ES10, O14}. We also cannot discuss numerical methods for convection-diffusion equations without mentioning the highly successful streamline-upwind Petrov-Galerkin (SUPG) method \cite{BH82,BH79}; the SUPG method is unique among all of these previously mentioned methods in the sense that it uses standard conforming finite elements for the trial space but uses a special space of test functions, biased in the upwind direction, to impart the method its stability \cite{C13}. This idea of test functions imparting stability to numerical methods in fact dates back to \cite{HHZM77} and it is this critical observation that lies at the heart of the discontinuous Petrov-Galerkin method.

The discontinuous Petrov-Galerkin (DPG) method was originally introduced by Demkowicz/Gopalakrishnan in the context of the transport equation \cite{DG10}. This idea was then abstracted and applied to a variety of different equations \cite{ZMDGPC11, DG11b, DGN12}. A thorough analysis of the DPG method, as applied to the Poisson equation, also exists \cite{DG11}. The DPG method can be applied to the abstract variational formulation of finding $u \in U$ such that
\begin{equation}
\begin{aligned}\label{contproblem}
B(u,v) = l(v) \qquad \forall v \in V,
\end{aligned}
\end{equation}
where $U$ and $V$ are some Hilbert Spaces, $B(\cdot,\cdot):U\times V \to \mathbb{R}$ is a continuous bilinear form and $l:V \to \mathbb{R}$ is a continuous linear functional. As is common, for the discretization of \eqref{contproblem}, we seek $u_h \in U_h \subset U$ such that
\begin{equation}
\begin{aligned}\label{discreteproblem}
B(u_h,v_h) = l(v_h) \qquad \forall v_h \in V_h \subset V,
\end{aligned}
\end{equation}
$\dim(U_h) = \dim(V_h) < \infty$. In general, the existence of a solution to the continuous problem \eqref{contproblem} does not guarantee that the discrete problem \eqref{discreteproblem} also has a solution \emph{unless} $V_h$ is chosen to be the so-called space of optimal test functions. To be specific, \eqref{discreteproblem} becomes the \emph{(theoretical) DPG method} when the test space is chosen to be the \emph{space of optimal test functions} $V_h := T(U_h)$ where $T:U \to V, u \mapsto Tu$ is the \emph{trial to test operator} given by the unique solution of the variational problem
\begin{equation}
\begin{aligned}
\label{trialtotest}
(Tu, v)_V = B(u, v) \qquad \forall v \in V.
\end{aligned}
\end{equation}
Here, the inner product $(\cdot,\cdot)_V$ induces a norm $||\cdot||_V$ referred to as the \emph{test norm} which, effectively, defines the DPG method via the variational equation above.  Note that the use of this test space, in addition to guaranteeing the solvability of \eqref{discreteproblem}, also means that $u_h \in U_h$ solving \eqref{discreteproblem} gives the best approximation error in the \emph{energy norm} $||\cdot||_E$, i.e.,
\begin{equation}
\begin{aligned}\label{energynorm}
||u-u_h||_E := \sup_{v \in V} \frac{B(u-u_h,v)}{||v||_V} = \inf_{w_h \in U_h}||u-w_h||_E.
\end{aligned}
\end{equation}

We now remark that the space of optimal test functions, as defined above, cannot actually be computed since \eqref{trialtotest} is an infinite dimensional problem; therefore, for the \emph{practical DPG method} we must approximate this variational equation. Typically, this is done through an enriched test space $\widetilde{V}_h$ based on the same mesh as $U_h$ but with higher polynomial degree so that $\dim(\widetilde{V}_h) > \dim(U_h)$ though other methods of forming this enriched test space do exist \cite{SMD19, NCC11}. Thus, if we let $\{\varphi_i\}$ denote a basis for $U_h$ then we are seeking a basis $\{\psi_i \in \widetilde{V}_h\}$ defining $V_h$ such that
\begin{equation}
\begin{aligned}
\label{practicaltrialtotest}
(\psi_i,\tilde{v}_h)_V = B(\varphi_i,\tilde{v}_h) \qquad \forall \tilde{v}_h \in \widetilde{V}_h.
\end{aligned}
\end{equation}
However, this still presents a problem; namely, \eqref{practicaltrialtotest} results in a matrix-vector system of higher dimension than that of the original discretization! This difficulty can be overcome by using a discontinuous enriched test space $\widetilde{V}_h$ and, hence, a discontinuous test space $V_h$. Under the additional assumption that the test norm $||\cdot||_V$ is localizable then \eqref{practicaltrialtotest} can now be solved elementwise instead of globally yielding a practical method.  Note that $V_h$ must be a subspace of $V$ which means that $V$ must also consist of discontinuous functions; it is necessary to take this into consideration when formulating the bilinear form.

As noted above, the DPG method converges optimally in the energy norm $||\cdot||_E$, however, we usually prefer to measure the error in a norm of our choice $||\cdot||_U$ appropriate to the problem at hand. By duality, $||\cdot||_U$ defines a corresponding norm $||\cdot||_{V,\, \text{opt}}$ on $V$ referred to as the \emph{optimal test norm}, viz.,
\begin{equation}
\begin{aligned}\label{optimaltestnorm}
||{v}||_{V,\, \text{opt}} := \sup_{{u} \in U} \frac{B({u}, {v})}{||{u}||_U}.
\end{aligned}
\end{equation}
The discrepency between the use of $||\cdot||_V$ in the DPG method and the `optimal norm' $||\cdot||_{V,\, \text{opt}}$ results in a worse convergence bound formulated precisely in the following convergence result from Theorem 2.1 in \cite{ZMDGPC11}.
\begin{theorem}\label{DPGconvg}
Let ${u}$ and ${u}_h$ denote the solutions of \eqref{contproblem} and \eqref{discreteproblem}, respectively and suppose that the practical norm on $V$ is equivalent to the optimal test norm on V, i.e., that there exists positive constants $C_L$ and $C_U$ such that 
$$C_L||{v}||_V \leq ||{v}||_{V, \text{opt}} \leq C_U||{v}||_V \qquad \forall {v} \in V.$$
Further suppose that the bilinear form $B$ satisfies
$$B({u}, {v}) = 0 \quad \forall {v} \in V \implies {u} = {0}.$$ 
Then
$$||{u} - {u}_h||_U \leq \frac{C_U}{C_L}\inf_{{w}_h \in U_h} ||{u} - {w}_h||_U.$$
\end{theorem}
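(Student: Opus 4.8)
The plan is to bracket the target norm $||\cdot||_U$ between multiples of the energy norm $||\cdot||_E$ and then invoke the optimality built into the space of optimal test functions. Concretely, I would establish the chain
\begin{equation*}
||u - u_h||_U \;\leq\; \frac{1}{C_L}\,||u - u_h||_E \;=\; \frac{1}{C_L}\inf_{w_h \in U_h}||u - w_h||_E \;\leq\; \frac{C_U}{C_L}\inf_{w_h \in U_h}||u - w_h||_U,
\end{equation*}
whose middle equality is exactly \eqref{energynorm} (the hypothesis $B(u,v)=0\;\forall v\in V\;\Rightarrow\;u=0$ ensures the theoretical DPG scheme is well-posed, so the discrete solution $u_h$ in the statement is well-defined). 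Taking the infimum over $w_h\in U_h$ at the end of the chain then gives the assertion.

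Both outer inequalities follow from one identity, which I would establish as a lemma: for every $u\in U$,
\begin{equation*}
\sup_{v\in V}\frac{B(u,v)}{||v||_{V,\,\text{opt}}}\;=\;||u||_U.
\end{equation*}
Granting it, and recalling that $||u||_E=\sup_{v\in V}\frac{B(u,v)}{||v||_V}$, the assumed equivalence $C_L||v||_V\le||v||_{V,\,\text{opt}}\le C_U||v||_V$ immediately yields
\begin{equation*}
\frac{1}{C_U}\,||u||_E\;\le\;||u||_U\;\le\;\frac{1}{C_L}\,||u||_E\qquad\text{for every }u\in U.
\end{equation*}
Applying the right-hand inequality to $u-u_h$ and the left-hand one to each $u-w_h$ produces the first and third steps of the chain.

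The remaining task --- and the step I expect to be the main obstacle --- is the identity itself. The bound ``$\le$'' is immediate from \eqref{optimaltestnorm}, which gives $B(u,v)\le||u||_U\,||v||_{V,\,\text{opt}}$ for all $u,v$. For ``$\ge$'' I would pass to duals: let $\mathcal{B}'\colon V\to U^\ast$ be the bounded operator determined by $\langle\mathcal{B}'v,u\rangle=B(u,v)$, so that \eqref{optimaltestnorm} reads $||\mathcal{B}'v||_{U^\ast}=||v||_{V,\,\text{opt}}$ and hence
\begin{equation*}
\sup_{v\in V}\frac{B(u,v)}{||v||_{V,\,\text{opt}}}=\sup_{\phi\in\mathcal{B}'(V)\setminus\{0\}}\frac{\langle\phi,u\rangle}{||\phi||_{U^\ast}}.
\end{equation*}
The hypothesis on $B$ says precisely that no nonzero element of $U$ annihilates the range $\mathcal{B}'(V)$, so, since $U$ (and hence $U^\ast$) is a Hilbert space, $\mathcal{B}'(V)$ is dense in $U^\ast$. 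Since $\phi\mapsto\langle\phi,u\rangle/||\phi||_{U^\ast}$ is continuous on $U^\ast\setminus\{0\}$ and invariant under positive rescaling, its supremum over the dense subspace $\mathcal{B}'(V)$ equals its supremum over all of $U^\ast\setminus\{0\}$, which is $||u||_U$ by the Riesz representation theorem. The delicate point in this last move is the interchange of supremum and closure; one handles it by reducing to unit functionals and noting that the unit sphere of $\mathcal{B}'(V)$ is dense in that of $U^\ast$, which follows from density by normalization. Everything else is just careful bookkeeping of the constants $C_L$ and $C_U$.
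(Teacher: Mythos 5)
Your proof is correct. The paper itself states this theorem without proof, quoting it from Theorem 2.1 of \cite{ZMDGPC11}, and your argument is essentially the standard one behind that result: the duality identity $\sup_{v\in V} B(u,v)/\|v\|_{V,\,\text{opt}} = \|u\|_U$ (whose nontrivial ``$\geq$'' direction you justify correctly from the injectivity hypothesis via Hahn--Banach/reflexivity density of $\mathcal{B}'(V)$ in $U^{\ast}$ and normalization to the unit sphere), combined with the assumed norm equivalence and the best-approximation property \eqref{energynorm} of the theoretical DPG solution in the energy norm.
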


Theorem \ref{DPGconvg} implies that we want to choose the test norm $||\cdot||_V$ to be as close as we can practically get to the optimal test norm $||\cdot||_{V,\, \text{opt}}$ in order to force $C_L$ and $C_U$ to be as near to unity as possible. Note that we cannot use $||\cdot||_{V,\, \text{opt}}$  itself in the DPG method since, as we will see below, for practical choices of the norm $||\cdot||_U$ the optimal test norm contains `jump norms' which couple nearby elements meaning it is non-localizable. In the context of convection-diffusion, then, the goal is to find an appropriate trial norm $||\cdot||_U$  in which to measure the error along with a test norm $||\cdot||_V$ which is as close as possible to the optimal test norm thus resulting in a good ratio of the constants $C_U$ to $C_L$.

Most of the DPG discretizations for the convection-diffusion equation are based on the ultra-weak formulation (to be derived in the next section) for which we have a vector-valued test function $\bm{v} = (v,\bm{\tau})$ with $v$ corresponding to $u$ and $\bm{\tau}$ corresponding to $\nabla u$. Various different test norms have been proposed for the DPG method as applied to the convection-diffusion equation \eqref{model_primal}. In \cite{DH13}, multiple test norms are trialed including the \emph{quasi-optimal test norm}
\begin{equation}
\begin{aligned}
||\bm{v}||_{V,\,\text{QO}}  := \sqrt{||\nabla \bigcdot \bm{\tau} - {\bf a} \bigcdot  \nabla v||^2+||\varepsilon ^{-1}\bm{\tau} + \nabla v||^2 + ||v||^2},
\end{aligned}
\end{equation}
which has performed well in the past on many PDE problems of very different character. Results show that this norm performs well up to a point using the standard $p$-enriched test space approach but that for higher polynomial degrees it was necessary to use computationally expensive Shishkin-type meshes for the enriched test space \cite{DH13}. This problem is somewhat remedied in  \cite{C13, CHBTD14} through the use of the \emph{mesh-dependent test norm} 
\begin{equation}
\begin{aligned}
||\bm{v}||_{V,\, \text{MD}} := \sqrt{||C_v v||^2 + \varepsilon||\nabla v||^2 + ||{\bf a} \bigcdot \nabla v||^2 + ||C_{\bm{\tau}} \bm{\tau}||^2 + ||\nabla \bigcdot \bm{\tau}||^2},
\end{aligned}
\end{equation}
where $\displaystyle C_v |_K := \min\{\sqrt{\varepsilon/|K|}, 1\}$, $C_{\bm{\tau}} |_K := \min\{1/\sqrt{\varepsilon}, 1/\sqrt{|K|}\}$, $K \in \mathcal{T}$ where $\mathcal{T}$ is some mesh of $\Omega$. It was shown that optimal test functions computed under this norm do not have boundary layers allowing for the standard $p$-enriched test space approach to work for this norm. All of the works \cite{C13, CHBTD14, DH13} contain upper and lower bounds between $||\bm{u}||_U$ and $||\bm{u}||_E$ with which we will compare bounds for our proposed test norm later in this paper. Another work \cite{NCC13} shows that the \emph{quasi-optimal test norm} 
\begin{equation}
\begin{aligned}||\bm{v}||_{V, \,\text{QO}_2}  := \sqrt{||\nabla \bigcdot \bm{\tau} - {\bf a} \bigcdot  \nabla v||^2+||\varepsilon^{-1}\bm{\tau} + \nabla v||^2 + \alpha_1||v||^2 + \alpha_2||\bm{\tau}||^2},
\end{aligned}
\end{equation}
performs well with the authors choosing the values $\alpha_1 = 1$, $\alpha_2 = \varepsilon^{-3/2}$ for these parameters in their numerical experiments. As in \cite{DH13}, Shishkin-type meshes are used for the enriched test space in that work.

Here, we propose the \emph{mesh-dependent quasi-optimal test norm}
\begin{equation}
\begin{aligned}\label{ourtestnorm}
||\bm{v}||_{V}  := \sqrt{\varepsilon||\nabla \bigcdot \bm{\tau} - {\bf a} \bigcdot  \nabla v||^2+||C_{\bm{\tau}}(\bm{\tau} + \varepsilon \nabla v)||^2 + \varepsilon||v||^2 + \varepsilon||\nabla v||^2},
\end{aligned}
\end{equation}
for use in the DPG method approximating \eqref{model_primal}. We prove upper and lower bounds between $||\bm{u}||_U$ and $||\bm{u}||_E$ for $||\cdot||_V$ that are robust in both of the field variables and have favorable scalings in the trace components with minimal assumptions on the convection (we say some norm is \emph{robust (with respect to $\varepsilon$)} if the ratio of upper to lower bounds is independent of $\varepsilon$). Similar upper/lower bounds can also be proven for our proposed test norm $||\cdot||_V$ but with the term $\varepsilon||\nabla v||^2$ replaced by either $\varepsilon^{-1}||\bm{\tau}||^2$ or $\varepsilon||\nabla \bigcdot \bm{\tau}||^2$ just with differing ($\varepsilon$-independent) constants. In light of this observation, we note the similarities between our proposed test norm $||\cdot||_V$ and the quasi-optimal test norm $||\cdot||_{V, \,\text{QO}_2}$ of \cite{NCC13}; due to this, we restrict ourselves to a comparison between the test norms $||\cdot||_V$ and $||\cdot||_{V,\,\text{MD}}$ only in the numerical experiments section.

The remainder of this paper is organized as follows: in the next section, we derive the ultra-weak variational formulation for \eqref{model_primal} and in section 3 we introduce the function spaces necessary in order to properly define the ultra-weak variational formulation. In section 4, we prove upper and lower bounds between $||\bm{u}||_U$ and $||\bm{u}||_E$ for the proposed test norm \eqref{ourtestnorm} and compare our results with similar results already in the literature. We then apply our test norm in a variety of numerical experiments in section 5 with a special emphasis on a comparison with the test norm $||\cdot||_{V,\,\text{MD}}$ from \cite{C13, CHBTD14} before drawing conclusions in section 6.

\section{Ultra-Weak Formulation}

Setting $\bm{\sigma} = \nabla u$ in \eqref{model_primal} gives the \emph{mixed formulation}
\begin{equation}\label{model_mixed}
\begin{aligned}
\nabla \bigcdot ({\bf a}u - \varepsilon \bm{\sigma}) &= f \qquad && \text{in }  \Omega,  \\
\bm{\sigma} - \nabla u &= 0 \qquad && \text{in }  \Omega,  \\
 u &=0 \mbox{ } && \text{on }  \Gamma_D,  \\  ({\bf a}u - \varepsilon\bm{\sigma}) \cdot \bm{n}&=0 \mbox{ } && \text{on }  \Gamma_N,
\end{aligned}
\end{equation}
Next, we assume that $\mathcal{T} = \{ K \}$ is a triangulation of the domain $\Omega$ with $K$ denoting a generic open element of diameter $h_K$ with which we associate the mesh skeleton $\Gamma$, that is, the union of all edges $E$ in the triangulation. If we let $v$ and $\bm{\tau}$ denote scalar and vector (respectively) valued test functions which are allowed to be discontinuous across $\Gamma$ then multiplying the first equation by $v$ and the second equation by $\bm{\tau}$ and integrating over $K \in \mathcal{T}$ we obtain
\begin{equation}
\begin{aligned}
\int_K \nabla \bigcdot ({\bf a}u - \varepsilon \bm{\sigma})v \dif x &= \int_K  fv \dif x,  \\
\int_K (\bm{\sigma} - \nabla u) \bigcdot \bm{\tau} \dif x &= 0.
\end{aligned}
\end{equation}
Using integration by parts to pass the trial function derivatives over to the test functions yields
\begin{equation}
\begin{aligned}
\int_K (\varepsilon \bm{\sigma} - {\bf a}u) \bigcdot  \nabla v \dif x + \int_{\partial K} ({\bf a}uv - \varepsilon \bm{\sigma}v) \bigcdot \bm{n}_K \dif s&= \int_K  fv \dif x,  \\
\int_K \{u \nabla \bigcdot \bm{\tau}+\bm{\sigma} \bigcdot \bm{\tau} \} \dif x - \int_{\partial K} u\,\bm{\tau} \bigcdot \bm{n}_K \dif s&= 0,
\end{aligned}
\end{equation}
with $\bm{n}_K$ denoting the outward unit normal to $\partial K$. Adding the two equations gives
\begin{equation}
\begin{aligned}
\int_K \{u (\nabla \bigcdot \bm{\tau} - {\bf a} \bigcdot  \nabla v )+\bm{\sigma} \bigcdot (\bm{\tau} + \varepsilon \nabla v) \} \dif x  +\int_{\partial K}  ({\bf a}uv-u\bm{\tau} - \varepsilon \bm{\sigma}v) \bigcdot \bm{n}_K \dif s&= \int_K  fv \dif x.
\end{aligned}
\end{equation}
If $E \subset \partial K$ is some edge then upon setting $\hat{u} |_{E} := u|_{E}$ and $\hat{\sigma}_n |_{E} := ({\bf a}u - \varepsilon \bm{\sigma}) \bigcdot \bm{n}  |_{E}$ where $\bm{n}$ is some unit normal vector to $E$ (or specifically the outward unit normal if $E \subset \partial \Omega$) we obtain 
\begin{equation}
\begin{aligned}
B_K(\{u,\bm{\sigma},\hat{u},\hat{\sigma}_n\}, \{v, \bm{\tau}\}) & := \int_K \{u (\nabla \bigcdot \bm{\tau} - {\bf a} \bigcdot  \nabla v )+\bm{\sigma} \bigcdot (\bm{\tau} + \varepsilon \nabla v) \} \dif x  - \int_{\partial K} \hat{u} \, \bm{\tau} \bigcdot \bm{n}_K \dif s 
\\&+\int_{\partial K} \text{sgn}(\bm{n}_K)\,\hat{\sigma}_n v \dif s = \int_K  fv \dif x =: (f,v)_K,
\end{aligned}
\end{equation}
where for some edge $E \subset \partial K$ we have 
\begin{equation}
\begin{aligned}
\text{sgn}(\bm{n}_K) := \begin{cases} 1 & \text{if } \bm{n}_K = \bm{n}, \\ -1 & \text{if } \bm{n}_K = -\bm{n}.\end{cases}
\end{aligned}
\end{equation}
The \emph{ultra-weak formulation} is then obtained by summing over all elements $K$, viz.,
\begin{equation}
\begin{aligned}
\label{ultraweaklong}
B(\{u,\bm{\sigma},\hat{u},\hat{\sigma}_n\}, \{v, \bm{\tau}\}) := \sum_{K \in \mathcal{T}} B_K (\{u,\bm{\sigma},\hat{u},\hat{\sigma}_n\}, \{v, \bm{\tau}\}) = \sum_{K \in \mathcal{T}} (f,v)_K =: (f,v).
\end{aligned}
\end{equation}
For brevity, it is useful to define the group variables $\bm{u} := (u,\bm{\sigma},\hat{u},\hat{\sigma}_n)$ and $\bm{v} := (v, \bm{\tau})$ thus allowing us to rewrite problem \eqref{ultraweaklong} more compactly as finding $\bm{u} \in U$ such that
\begin{equation}
\begin{aligned}
\label{ultraweakshort}
B(\bm{u},\bm{v}) = (f,v) \qquad \forall \bm{v} \in V,
\end{aligned}
\end{equation}
with the vector spaces $U$ and $V$ to be defined in the next section. Additionally, the bilinear form $B$ may be further simplified to
\begin{equation}
\begin{aligned}
\label{bilinearform}
B(\bm{u},\bm{v})  = \sum_{K \in \mathcal{T}} \int_K \{u (\nabla \bigcdot \bm{\tau} - {\bf a} \bigcdot  \nabla v )+\bm{\sigma} \bigcdot (\bm{\tau} + \varepsilon \nabla v) \} \dif x + \int_{\Gamma} \{\hat{\sigma}_n\timejump{v}-\hat{u}\timejump{\bm{\tau}}\} \dif s,
\end{aligned}
\end{equation}
by taking advantage of jump notation
\begin{equation}
\begin{aligned}
\notag
&\timejump{v}\big|_E := \text{sgn}(\bm{n}_K)v|_K + \text{sgn}(\bm{n}_{K'})v|_{K'}, \quad &&\timejump{\bm{\tau}} \big|_E := \mathbf{\tau}|_K \bigcdot \mathbf{n}_K + \mathbf{\tau}|_{K'} \bigcdot \mathbf{n}_{K'} \quad  && E = \partial K \cap \partial K', \\
&\timejump{v}\big|_E := v|_E, &&\timejump{\bm{\tau}} \big|_E := \mathbf{\tau}|_E \bigcdot \mathbf{n}, && E \subset \partial\Omega.
\end{aligned}
\end{equation}

\section{Function Spaces}

In order for the ultra-weak formulation \eqref{ultraweakshort} to be well-defined, the vector spaces $U$ and $V$ need to be specified. Hence, this section is dedicated to introducing the notation necessary to define these spaces as well as some miscellaneous definitions and theorems which will be needed in the forthcoming analysis.

We begin by defining the broken $L^2$-norm
$$||\bm{\sigma}|| := \bigg(\sum_{K \in \mathcal{T}} \int_K |\bm{\sigma}(x)|^2 \dif x \bigg)^{\!1/2},$$
for the space $[L^2(\Omega)]^n$, $n \in \mathbb{N}$. Next, we introduce the $\varepsilon$-scaled broken $H^1$-norm
$$||u||_{H^1_{\varepsilon}(\Omega; \,\mathcal{T})} := \sqrt{\varepsilon^{-1}||u||^2 + \varepsilon||\nabla u||^2},$$
for the space $H^1(\Omega; \,\mathcal{T}) := \{u:\Omega \to \mathbb{R} \, | \, u \in H^1(K) \,\,\forall K \in \mathcal{T} \}$ and the $\varepsilon$-scaled broken $H(\text{div})$-norm
$$||\bm{\sigma}||_{H_{\varepsilon}(\text{div}, \,\Omega; \,\mathcal{T})} :=  \sqrt{\varepsilon^{-1}||\bm{\sigma}||^2 + \varepsilon^{-1}||\nabla \bigcdot \bm{\sigma}||^2},$$
for the space $H(\text{div}, \,\Omega; \,\mathcal{T}) := \{\bm{\sigma}:\Omega \to \mathbb{R}^d \, | \, \bm{\sigma} \in H(\text{div},K) \,\, \forall K \in \mathcal{T}  \}$. We remark that {\bf these norms are non-standard} and will be used in this way for the remainder of the paper. Additionally, we will also require the use of the standard Sobolev spaces
$H^1_D := \{u \in H^1(\Omega) \,|\, u = 0 \text{ on } \Gamma_D\}$ and $H_N(\text{div}, \,\Omega) := \{ \bm{\sigma} \in H(\text{div},\,\Omega) \, | \, \bm{\sigma} \bigcdot \bm{n} = 0 \text{ on } \Gamma_N\}$ where the boundary equalities are to be understood in the sense of traces. 

We also need to introduce spaces in order to be able to describe the trace variables. To that end, we set
\begin{equation}
\begin{aligned}
\notag
{H_D^{1/2}(\Gamma)} &:= \{\hat{u}:\Gamma \to \mathbb{R} \, | \, \exists w \in H^1_D(\Omega) \text{ such that } \hat{u} = w|_{\Gamma} \}, \\
{H_N^{-1/2}(\Gamma)} &:= \{\hat{\sigma}_n:\Gamma \to \mathbb{R} \, | \, \exists \bm{w} \in H_N(\text{div}, \,\Omega) \text{ such that } \hat{\sigma}_n = \bm{w} \bigcdot \bm{n} |_{\Gamma} \},
\end{aligned}
\end{equation}
where these boundary equalities are again to be understood in the sense of traces. These trace spaces admit so-called minimal extension norms
\begin{equation}
\begin{aligned}\notag
||\hat{u}||_{H^{1/2}_D(\Gamma)} & := \inf\{||w||_{H_{\varepsilon}^1(\Omega; \, \mathcal{T})} \, | \, w \in H^1_D(\Omega) \text{ and } \hat{u}  = w |_{\Gamma} \}, \\
||\hat{\sigma}_n||_{H^{-1/2}_N(\Gamma)} & := \inf\{||\bm{w}||_{H_{\varepsilon}(\text{div},\Omega; \, \mathcal{T})} \, | \, \bm{w} \in H_N(\text{div},\Omega) \text{ and } \hat{\sigma}_n  = \bm{w} \bigcdot \bm{n} |_{\Gamma} \}.
\end{aligned}
\end{equation}

\smallskip

With this notation at hand, we are now ready to state the vector spaces for problem \eqref{ultraweakshort}. We are seeking $\bm{u} \in U := L^2(\Omega) \!\times\! [L^2(\Omega)]^d \!\times\! H_D^{1/2}(\Gamma) \!\times\! H_N^{-1/2}(\Gamma)$ such that \eqref{ultraweakshort} holds for all $\bm{v} \in V:=H^1(\Omega; \mathcal{T}) \! \times \! H(\text{div}, \,\Omega; \,\mathcal{T})$. To the trial space $U$, we associate the $\varepsilon$-scaled norm
\begin{equation}
\begin{aligned}\label{unorm}
||\bm{u}||_U := \sqrt{\varepsilon^{-1}||u||^2 + \varepsilon||\bm{\sigma}||^2 + ||\hat{u}||_{H_D^{1/2}(\Gamma)}^2 + ||\hat{\sigma}_n||_{H_N^{-1/2}(\Gamma)}^2},
\end{aligned}
\end{equation}
while we recall that the test space $V$ can be endowed with the so-called optimal test norm \eqref{optimaltestnorm}. It can be shown, cf. \cite{C13, CHBTD14}, that the optimal test norm \eqref{optimaltestnorm} corresponding to the bilinear form \eqref{bilinearform} with respect to the norm $||\cdot||_U$ is given by
\begin{equation}
\begin{aligned}
\label{actualoptimaltestnorm}
||\bm{v}||_{V,\, \text{opt}}  &= \sqrt{\varepsilon||\nabla \bigcdot \bm{\tau} - {\bf a} \bigcdot  \nabla v||^2+\varepsilon^{-1}||\bm{\tau} + \varepsilon \nabla v||^2 + ||\timejump{v}||^2_{\Gamma} + ||\timejump{\bm{\tau}}||_{\Gamma}^2},
\end{aligned}
\end{equation}
where
\begin{equation}
\begin{aligned}\notag
||\timejump{v}||_{\Gamma} & :=  \sup_{\bm{w} \in H_N(\text{div}, \,\Omega)} \frac{\int_{\Gamma}  \bm{w} \bigcdot \bm{n}\, \timejump{v}\dif s}{||\bm{w}||_{H_{\varepsilon}(\text{div},\Omega; \, \mathcal{T})}}, \\
||\timejump{\bm{\tau}}||_{\Gamma}  &:= \sup_{w \in H^1_D(\Omega)} \frac{\int_{\Gamma} w\timejump{\bm{\tau}}\dif s}{||w||_{H_{\varepsilon}^1(\Omega; \, \mathcal{T})}}.
\end{aligned}
\end{equation}

Ideally, we would use \eqref{actualoptimaltestnorm} in our DPG method, however, the jump norms couple nearby elements together meaning a global instead of local solve would be required in order to calculate the optimal test functions required for the DPG method to function; it is therefore practically necessary to find a localizable upper bound for \eqref{actualoptimaltestnorm}. 

In the forthcoming analysis, we will make extensive use of the following theorem which is the well-known {\em classical Poincar{\'e}-Friedrichs inequality} the proof of which can be readily found in many functional analysis textbooks.

\begin{theorem}\label{pf}
For any $v \in H^1_D(\Omega)$ we have
$$||v|| \leq C_P||\nabla v||,$$
where $C_P$ is some positive constant which is only dependent upon the size of the domain $\Omega$.
\end{theorem}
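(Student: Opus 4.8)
The plan is to argue by contradiction using the Rellich--Kondrachov compactness theorem, which is the standard route when the vanishing trace is prescribed only on the proper subset $\Gamma_D \subset \partial\Omega$. First I would suppose the asserted inequality fails for every choice of constant; then there is a sequence $(v_n) \subset H^1_D(\Omega)$ with $||v_n|| = 1$ and $||\nabla v_n|| \to 0$. In particular $(v_n)$ is bounded in $H^1(\Omega)$, so since $\Omega$ is a bounded polyhedral (hence Lipschitz) domain, after passing to a subsequence we may assume by Rellich--Kondrachov that $v_n \to v$ strongly in $L^2(\Omega)$ for some $v$.

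Next I would identify the limit. From $v_n \to v$ in $L^2(\Omega)$ and $||\nabla v_n|| \to 0$ one gets $\nabla v = 0$ (testing against smooth compactly supported vector fields), whence $||v_n - v||_{H^1}^2 = ||v_n - v||^2 + ||\nabla v_n||^2 \to 0$, so in fact $v_n \to v$ strongly in $H^1(\Omega)$ and $v$ is locally constant on $\Omega$. Continuity of the Dirichlet trace operator $H^1(\Omega) \to L^2(\Gamma_D)$ then forces the trace of $v$ to be the $L^2(\Gamma_D)$-limit of the (vanishing) traces of the $v_n$, i.e.\ zero; since $\Gamma_D$ carries positive $(d-1)$-dimensional surface measure (and, implicit in the Dirichlet condition in \eqref{model_primal}, meets every connected component of $\Omega$), the only such locally constant function is $v \equiv 0$, contradicting $||v|| = 1$. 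This contradiction establishes the existence of $C_P$, and tracking the argument shows it may be taken to depend only on $\Omega$ (and $\Gamma_D$). Alternatively, in the pure Dirichlet case, or whenever $v$ extends by zero across $\Gamma_N$, one has the elementary slab estimate: enclosing $\Omega$ between parallel hyperplanes a distance $L \le \mathrm{diam}(\Omega)$ apart, writing $v(x) = \int_0^{x_1} \partial_1 v \dif t$ along the short direction for the zero-extended function and applying Cauchy--Schwarz and Fubini gives $||v||^2 \le L^2 ||\partial_1 v||^2 \le L^2 ||\nabla v||^2$, i.e.\ $C_P = L$.

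The only genuinely delicate step will be ruling out nonzero constants in the limit: this is where one must invoke continuity of the trace operator and use that $\Gamma_D$ has positive surface measure and touches each component of $\Omega$, since without a Dirichlet portion the inequality is simply false. The remaining ingredients --- the uniform $H^1$ bound, extraction of a convergent subsequence, Rellich compactness, and the upgrade to strong $H^1$-convergence --- are entirely routine, which is why the result can indeed be quoted from standard references.
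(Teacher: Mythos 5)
The paper does not actually prove this statement --- it is quoted as the classical Poincar\'e--Friedrichs inequality with a pointer to standard references --- so there is no in-paper argument to compare against; what you supply is precisely the standard textbook proof (the Rellich--Kondrachov compactness/contradiction argument, sometimes called the Peetre--Tartar or Deny--Lions argument), and it is correct. Two small caveats are worth recording. First, your proof genuinely needs $\Gamma_D$ to have positive surface measure and to meet every connected component of $\Omega$; the paper never states this explicitly (it only asks that $\overline{\Gamma}_D \cup \overline{\Gamma}_N = \partial\Omega$), and without it the inequality fails for constants, so you are right to flag it as an implicit hypothesis rather than something you can derive. Second, the compactness route produces a constant $C_P$ depending on $\Omega$ \emph{and} $\Gamma_D$ but gives no explicit control, so it does not literally deliver the paper's claim that $C_P$ depends ``only on the size of the domain''; your slab estimate $C_P \le \mathrm{diam}(\Omega)$ does give such explicit dependence, but only when the zero extension across $\Gamma_N$ is legitimate (essentially the pure Dirichlet case), since a function in $H^1_D(\Omega)$ need not vanish on $\Gamma_N$ and its zero extension need not lie in $H^1$. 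Neither caveat is a gap in your reasoning --- they are looseness in the paper's own statement --- and for the way $C_P$ is used later (as a generic $\varepsilon$-independent constant) your version of the result is entirely sufficient.
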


For DPG analysis, the classical Poincar{\'e}-Friedrichs inequality is not generally sufficient given we are usually dealing with discontinuous functions. To that end, we have the following discontinuous variant of this classical theorem: the {\em broken Poincar{\'e}-Friedrichs inequality}.
\begin{theorem}\label{brokenpf}
For any $v \in H^1(\Omega; \,\mathcal{T})$ we have
$$||v|| \leq C_P ||\nabla v|| + \varepsilon^{-1/2}\sqrt{1+C_P^2}\,||\timejump{v}||_{\Gamma},$$
where $C_P$ is the classical Poincar{\'e}-Friedrichs constant.
\end{theorem}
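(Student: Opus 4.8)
The plan is to prove the estimate by an $L^2$ duality argument built on a single scalar auxiliary problem. I would write $\norm{v} = \sup\{\int_\Omega v g\dif x : g\in L^2(\Omega),\ \norm{g}=1\}$, and for a fixed such $g$ construct a vector field $\bm w\in H_N(\text{div},\Omega)$ with $\nabla\bigcdot\bm w = -g$; the point of insisting $\bm w\in H_N(\text{div},\Omega)$ is that this is exactly the admissible class in the supremum defining $\norm{\timejump{v}}_{\Gamma}$. Integrating by parts element by element then turns $\int_\Omega v g\dif x$ into a volume term paired with $\nabla v$ plus a skeleton term paired with $\timejump{v}$, after which Cauchy--Schwarz and the classical Poincaré--Friedrichs inequality (Theorem \ref{pf}) finish the job.

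For the construction, let $z\in H^1_D(\Omega)$ be the weak solution of the mixed problem $-\Delta z = g$ in $\Omega$, $z=0$ on $\Gamma_D$, $\nabla z\bigcdot\bm n = 0$ on $\Gamma_N$; that is, $\int_\Omega \nabla z\bigcdot\nabla\phi\dif x = \int_\Omega g\phi\dif x$ for all $\phi\in H^1_D(\Omega)$, which is uniquely solvable by Lax--Milgram since coercivity of the Dirichlet form on $H^1_D(\Omega)$ is precisely Theorem \ref{pf}. Put $\bm w := \nabla z$. Then $\bm w\in H(\text{div},\Omega)$ with $\nabla\bigcdot\bm w = -g$ distributionally, and the natural boundary condition of the weak formulation gives $\bm w\bigcdot\bm n = 0$ on $\Gamma_N$, so $\bm w\in H_N(\text{div},\Omega)$. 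Two bounds follow at once: $\norm{\nabla\bigcdot\bm w} = \norm{g} = 1$, and testing the weak formulation with $\phi = z$ and invoking Theorem \ref{pf} gives $\norm{\bm w}^2 = \norm{\nabla z}^2 = \int_\Omega g z\dif x \le \norm{g}\,\norm{z} \le C_P\norm{g}\,\norm{\nabla z}$, whence $\norm{\bm w}\le C_P$. Therefore
\[
\norm{\bm w}_{H_{\varepsilon}(\text{div},\Omega;\mathcal{T})} = \sqrt{\varepsilon^{-1}\norm{\bm w}^2 + \varepsilon^{-1}\norm{\nabla\bigcdot\bm w}^2} \le \varepsilon^{-1/2}\sqrt{1+C_P^2}.
\]

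Next I would integrate by parts on each $K\in\mathcal{T}$ (legitimate since $v|_K\in H^1(K)$ and $\bm w|_K\in H(\text{div},K)$):
\[
\int_\Omega v g\dif x = -\sum_{K\in\mathcal{T}}\int_K v\,\nabla\bigcdot\bm w\dif x = \sum_{K\in\mathcal{T}}\Big(\int_K \nabla v\bigcdot\bm w\dif x - \int_{\partial K} v\,\bm w\bigcdot\bm n_K\dif s\Big).
\]
Since $\bm w$ has a single-valued normal trace, the element-boundary contributions assemble into one skeleton integral: with the fixed edge normals and the $\text{sgn}(\bm n_K)$ convention of the jump notation, $\sum_K\int_{\partial K} v\,\bm w\bigcdot\bm n_K\dif s = \int_\Gamma \timejump{v}\,(\bm w\bigcdot\bm n)\dif s$ (on $\partial\Omega$ the term is $v\,\bm w\bigcdot\bm n$, which vanishes on $\Gamma_N$ and is consistent with $\timejump{v}|_E = v|_E$ on $\Gamma_D$). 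Hence $\int_\Omega v g\dif x = \int_\Omega \nabla v\bigcdot\bm w\dif x - \int_\Gamma \timejump{v}\,(\bm w\bigcdot\bm n)\dif s$, and estimating the first term by Cauchy--Schwarz and the second by the definition of $\norm{\timejump{v}}_{\Gamma}$ (valid because $\bm w\in H_N(\text{div},\Omega)$) gives
\[
\int_\Omega v g\dif x \le \norm{\nabla v}\,\norm{\bm w} + \norm{\timejump{v}}_{\Gamma}\,\norm{\bm w}_{H_{\varepsilon}(\text{div},\Omega;\mathcal{T})} \le C_P\norm{\nabla v} + \varepsilon^{-1/2}\sqrt{1+C_P^2}\,\norm{\timejump{v}}_{\Gamma}.
\]
Taking the supremum over $\norm{g}=1$ yields the claim.

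The main obstacle I anticipate is not any estimate but the careful handling of the auxiliary problem and the skeleton bookkeeping: one must confirm that the weak solution $z$ genuinely lies in $H^1_D(\Omega)$ with $\nabla z\in H_N(\text{div},\Omega)$, so that $\bm w$ is an admissible competitor in the supremum defining $\norm{\timejump{v}}_{\Gamma}$, and one must justify the elementwise integration by parts together with the assembly of the boundary contributions into $\int_\Gamma\timejump{v}\,(\bm w\bigcdot\bm n)\dif s$ with the correct signs — including the behaviour on $\Gamma_N$ (where $\bm w\bigcdot\bm n$ vanishes) and the interpretation of the skeleton pairing as the appropriate $H^{-1/2}$--$H^{1/2}$ duality edge by edge. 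Everything else reduces to Cauchy--Schwarz and the classical Poincaré--Friedrichs inequality of Theorem \ref{pf}.
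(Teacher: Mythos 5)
Your proof is correct and follows essentially the same route as the paper: you solve the auxiliary mixed Poisson problem, set $\bm w=\nabla z$, integrate by parts elementwise, and control the skeleton term via the definition of $\norm{\timejump{v}}_{\Gamma}$ together with the classical Poincar\'e--Friedrichs stability bound. The only cosmetic difference is that you phrase it as an $L^2$ duality over normalized data $g$, whereas the paper takes $g=v$ directly (solving $-\Delta\phi=v$) and divides by $\norm{v}$ at the end.
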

\begin{proof}
Let $\phi \in \{w \in H^1(\Omega) \, | \, w= 0 \text{ on } \Gamma_D \text{ and }\nabla w \bigcdot \bm{n} = 0 \text{ on } \Gamma_N\}$ be the solution of the problem $-\Delta \phi = v$ then we have stability, viz.,
\begin{equation}
\begin{aligned}
\notag||\nabla \phi|| \leq C_P||v||.
\end{aligned}
\end{equation}
Next, observe that
\begin{equation}
\begin{aligned}
\notag||v||^2 &= -\int_{\Omega} v\Delta\phi \dif x = \sum_{K \in \mathcal{T}} \left\{\int_K \nabla v \bigcdot \nabla \phi \dif x - \int_{\partial K} v\nabla\phi \bigcdot \bm{n} \dif s \right\}.
\end{aligned}
\end{equation}
Now since
\begin{equation}
\begin{aligned}\notag
- \sum_{K \in \mathcal{T}} \int_{\partial K} v\nabla\phi \bigcdot \bm{n} \dif s = -\frac{\int_\Gamma \nabla \phi \bigcdot \bm{n}\, \timejump{v}\dif s}{||\nabla \phi||_{H_{\varepsilon}(\text{div},\Omega; \, \mathcal{T})}}||\nabla \phi||_{H_{\varepsilon}(\text{div},\Omega; \, \mathcal{T})} \leq  ||\timejump{v}||_{\Gamma}||\nabla \phi||_{H_{\varepsilon}(\text{div},\Omega; \, \mathcal{T})},
\end{aligned}
\end{equation}
then with the aid of the Cauchy-Schwarz inequality we have
\begin{equation}
\begin{aligned}
\notag
\notag||v||^2 \leq ||\nabla v||||\nabla \phi|| + ||\timejump{v}||_{\Gamma}||\nabla \phi||_{H_{\varepsilon}(\text{div},\Omega; \, \mathcal{T})}.
\end{aligned}
\end{equation}
The result then follows from the definition of $||\cdot||_{H_{\varepsilon}(\text{div},\Omega; \, \mathcal{T})}$ and the stability property.
\end{proof}

\section{Robust Test Norm}

In this section, we will work to show that the norm
$$||\bm{v}||_{V}  := \sqrt{\varepsilon||\nabla \bigcdot \bm{\tau} - {\bf a} \bigcdot  \nabla v||^2+\varepsilon^{-1}||\bm{\tau} + \varepsilon \nabla v||^2 + \varepsilon||v||^2 + \varepsilon||\nabla v||^2},$$
is a test norm sufficiently close to $||\cdot||_{V,\,\text{opt}}$ that the corresponding \emph{energy norm} \eqref{energynorm} on $U$ generated by $||\cdot||_V$ is both a robust upper and lower bound with respect to $\varepsilon$ in the $u$ and $\bm{\sigma}$ solution components while also having favorable scaling with respect to $\varepsilon$ in the $\hat{u}$ and $\hat{\sigma}_n$ components.

\subsection{Upper Bound}

We begin by proving that $||\cdot||_U$ is a robust upper bound for $||\cdot||_E$. In order to do this, we first need to show that $||\cdot||_V$ is an upper bound for $||\cdot||_{V,\,\text{opt}}$. This is the content of the next theorem.

\begin{theorem}\label{Voptupperbound}
For any $\bm{v}\in V$ we have
$$||\bm{v}||_{V,\, \text{opt}} \leq C_U||\bm{v}||_V,$$
where $C_U := \sqrt{3 + 2||{\bf a}||_{L^\infty(\Omega)}^2}$.
\end{theorem}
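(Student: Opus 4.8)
The plan is to exploit the fact that the first two terms of the optimal test norm \eqref{actualoptimaltestnorm} and of $||\cdot||_V$ coincide verbatim, so that it suffices to bound the two jump seminorms $||\timejump{v}||_{\Gamma}$ and $||\timejump{\bm\tau}||_{\Gamma}$ by the field contributions $\varepsilon||v||^2$ and $\varepsilon||\nabla v||^2$ of $||\cdot||_V$ (with help from the two residual terms in the case of $||\timejump{\bm\tau}||_{\Gamma}$).

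First I would estimate $||\timejump{v}||_{\Gamma}$. Fixing $\bm w\in H_N(\text{div},\Omega)$ and integrating by parts element-wise as in the proof of Theorem \ref{brokenpf}, one obtains $\int_{\Gamma}\bm w\bigcdot\bm n\,\timejump{v}\dif s=\sum_{K}\int_K(v\,\nabla\bigcdot\bm w+\bm w\bigcdot\nabla v)\dif x$, the $\Gamma_N$ part dropping out because $\bm w\bigcdot\bm n=0$ there. Estimating the right-hand side by Cauchy--Schwarz on each element, then applying Cauchy--Schwarz in $\mathbb{R}^2$ to the pair $(\varepsilon^{-1/2}||\bm w||,\,\varepsilon^{-1/2}||\nabla\bigcdot\bm w||)$ whose Euclidean length is exactly $||\bm w||_{H_{\varepsilon}(\text{div},\Omega;\mathcal T)}$, and dividing through, gives $||\timejump{v}||_{\Gamma}^2\le\varepsilon||v||^2+\varepsilon||\nabla v||^2$.

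Next I would estimate $||\timejump{\bm\tau}||_{\Gamma}$. Fixing $w\in H^1_D(\Omega)$, continuity of $w$ across interior edges together with $w=0$ on $\Gamma_D$ gives $\int_{\Gamma}w\,\timejump{\bm\tau}\dif s=\sum_K\int_K(w\,\nabla\bigcdot\bm\tau+\bm\tau\bigcdot\nabla w)\dif x$. The crucial step is to substitute $\nabla\bigcdot\bm\tau=(\nabla\bigcdot\bm\tau-{\bf a}\bigcdot\nabla v)+{\bf a}\bigcdot\nabla v$ and $\bm\tau=(\bm\tau+\varepsilon\nabla v)-\varepsilon\nabla v$, and to collect the two $\nabla v$-contributions into $\nabla v\bigcdot(w{\bf a}-\varepsilon\nabla w)$. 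Estimating by Cauchy--Schwarz, bounding $||w{\bf a}-\varepsilon\nabla w||\le||{\bf a}||_{L^\infty(\Omega)}||w||+\varepsilon||\nabla w||$, rescaling each factor against $||w||_{H^1_{\varepsilon}(\Omega;\mathcal T)}=(\varepsilon^{-1}||w||^2+\varepsilon||\nabla w||^2)^{1/2}$, and applying Cauchy--Schwarz in $\mathbb{R}^2$ twice, produces
$$||\timejump{\bm\tau}||_{\Gamma}\le\sqrt{\varepsilon||\nabla\bigcdot\bm\tau-{\bf a}\bigcdot\nabla v||^2+\varepsilon^{-1}||\bm\tau+\varepsilon\nabla v||^2}+\sqrt{1+||{\bf a}||_{L^\infty(\Omega)}^2}\,\big(\varepsilon||\nabla v||^2\big)^{1/2}.$$

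To finish, I would square this, apply Young's inequality $2\xi\eta\le\xi^2+\eta^2$ to the cross term, add the bound for $||\timejump{v}||_{\Gamma}^2$ and the two common residual terms, and collect coefficients: the term $\varepsilon||\nabla v||^2$ ends up with coefficient exactly $3+2||{\bf a}||_{L^\infty(\Omega)}^2$ and every other term with coefficient at most $3$, so $||\bm v||_{V,\,\text{opt}}^2\le(3+2||{\bf a}||_{L^\infty(\Omega)}^2)||\bm v||_V^2$, which is the claim. I expect the main obstacle to be exactly this bookkeeping for $||\timejump{\bm\tau}||_{\Gamma}$: one must keep $\nabla\bigcdot\bm\tau$ and $\bm\tau$ expressed through the residuals (so that $\varepsilon||\nabla\bigcdot\bm\tau-{\bf a}\bigcdot\nabla v||^2$ and $\varepsilon^{-1}||\bm\tau+\varepsilon\nabla v||^2$ are what appear, not bare norms of $\bm\tau$), keep the convective factor $||{\bf a}||_{L^\infty(\Omega)}$ attached solely to $\varepsilon||\nabla v||^2$, and avoid the lossy ``sum-of-three-terms squared $\le 3\times$ sum of squares'' bound — choosing the Young parameter equal to $1$ is what makes the $\varepsilon||\nabla v||^2$ coefficient close up at precisely $3+2||{\bf a}||_{L^\infty(\Omega)}^2$. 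The element-wise integration-by-parts identities for broken $H^1$ and broken $H(\text{div})$ functions are standard and can be justified as in the proof of Theorem \ref{brokenpf}.
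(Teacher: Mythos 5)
Your proof is correct and follows essentially the same route as the paper: elementwise integration by parts plus Cauchy--Schwarz against the $\varepsilon$-scaled norms to bound the two jump terms, then re-expressing $\bm{\tau}$ and $\nabla\bigcdot\bm{\tau}$ through the residual quantities at the cost of a factor $2$ and of $||{\bf a}||_{L^\infty(\Omega)}^2$. The only (cosmetic) difference is that you substitute the residuals inside the integrand before applying Cauchy--Schwarz, whereas the paper first bounds $||\timejump{\bm{\tau}}||_{\Gamma}^2$ by $\varepsilon^{-1}||\bm{\tau}||^2+\varepsilon||\nabla\bigcdot\bm{\tau}||^2$ and then applies the triangle inequality; both orderings give the identical intermediate bound and the same constant $C_U=\sqrt{3+2||{\bf a}||_{L^\infty(\Omega)}^2}$.
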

\begin{proof}
Since the field terms are already present in the correct form, we only need to work on bounding the jump norms. Through integration by parts, the Cauchy-Schwarz inequality and the elementary inequality $ab + cd \leq \sqrt{a^2+c^2}\sqrt{b^2+d^2}$ we have
\begin{equation}
\begin{aligned}\notag
||\timejump{v}||_{\Gamma} & =  \sup_{\bm{w} \in H_N(\text{div}, \,\Omega)} \frac{\sum_{K \in \mathcal{T}} \int_K (v \nabla \bigcdot \bm{w}+\bm{w} \bigcdot \nabla v ) \dif x   }{||\bm{w}||_{H_{\varepsilon}(\text{div},\Omega; \, \mathcal{T})}} \\
& \leq \sup_{\bm{w} \in H_N(\text{div}, \,\Omega)} \frac{||v||||\nabla \bigcdot \bm{w}||  +  ||\nabla v||||\bm{w}||}{||\bm{w}||_{H_{\varepsilon}(\text{div},\Omega; \, \mathcal{T})}}  \\
& \leq \sup_{\bm{w} \in H_N(\text{div}, \,\Omega)} \frac{\sqrt{\varepsilon||v||^2 + \varepsilon||\nabla v||^2}\sqrt{\varepsilon^{-1}||\bm{w}||^2 + \varepsilon^{-1}||\nabla \bigcdot \bm{w}||^2}}{||\bm{w}||_{H_{\varepsilon}(\text{div},\Omega; \, \mathcal{T})}}  \\
& = \sqrt{\varepsilon||v||^2 + \varepsilon||\nabla v||^2}.
\end{aligned}
\end{equation}
Similarly,
\begin{equation}
\begin{aligned}\notag
||\timejump{\bm{\tau}}||_{\Gamma} & = \sup_{w \in H^1_D(\Omega)} \frac{\sum_{K \in \mathcal{T}} \int_K (\bm{\tau} \bigcdot \nabla w + w \nabla \bigcdot \bm{\tau}) \dif x   }{||w||_{H_{\varepsilon}^1(\Omega; \, \mathcal{T})}} \\
& \leq \sup_{w \in H^1_D(\Omega)} \frac{||\bm{\tau}||||\nabla w||  +  ||\nabla \bigcdot \bm{\tau}||||w|| }{||w||_{H_{\varepsilon}^1(\Omega; \, \mathcal{T})}} \\
& \leq \sup_{w \in H^1_D(\Omega)} \frac{\sqrt{\varepsilon^{-1}||\bm{\tau}||^2  +  \varepsilon||\nabla \bigcdot \bm{\tau}||^2}\sqrt{\varepsilon^{-1}||w||^2 + \varepsilon||\nabla w||^2} }{||w||_{H_{\varepsilon}^1(\Omega; \, \mathcal{T})}} \\
& = \sqrt{\varepsilon^{-1}||\bm{\tau}||^2  +  \varepsilon||\nabla \bigcdot \bm{\tau}||^2}.
\end{aligned}
\end{equation}
Thus we have
$$||\bm{v}||_{V,\, \text{opt}}^2 \leq \varepsilon||\nabla \bigcdot \bm{\tau} - {\bf a} \bigcdot  \nabla v||^2+\varepsilon^{-1}||\bm{\tau} + \varepsilon \nabla v||^2 + \varepsilon||v||^2 + \varepsilon||\nabla v||^2 + \varepsilon^{-1}||\bm{\tau}||^2 + \varepsilon||\nabla \bigcdot \bm{\tau}||^2.$$
All that remains is to show that this upper bound may in turn be bounded by $||{\bm v}||^2_V$. Using the triangle inequality, H{\"o}lder's inequality as well as the trivial inequality $(a+b)^2 \leq 2(a^2+b^2)$ yields
\begin{equation}
\begin{aligned}\notag
\varepsilon^{-1}||\bm{\tau}||^2 & \leq 2\varepsilon^{-1}||\bm{\tau} + \varepsilon\nabla v||^2 + 2\varepsilon||\nabla v||^2, \\
\varepsilon||\nabla \bigcdot \bm{\tau}||^2 & \leq 2\varepsilon||\nabla \bigcdot \bm{\tau} - {\bf a} \bigcdot  \nabla v||^2 + 2||{\bf a}||_{L^\infty(\Omega)}^2\varepsilon||\nabla v||^2,
\end{aligned}
\end{equation}
which upon substitution completes the proof.
\end{proof}

Having shown that $||\cdot||_V$ is an upper bound for $||\cdot||_{V,\,\text{opt}}$, proving that $||\cdot||_U$ is an upper bound for $||\cdot||_E$ is now trivial.
\begin{theorem}\label{unormlowerbound}
For any $\bm{u} \in U$ we have the upper bound 
$$||\bm{u}||_E \leq C_U||\bm{u}||_U,$$
where $C_U := \sqrt{3 + 2||{\bf a}||_{L^\infty(\Omega)}^2}$.
\end{theorem}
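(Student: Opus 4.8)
The claim is an immediate consequence of Theorem \ref{Voptupperbound} once we unwind the definitions, so the plan is essentially bookkeeping with suprema. First I would write down the energy norm in the form \eqref{energynorm}, namely $||\bm{u}||_E = \sup_{\bm{v}\in V} B(\bm{u},\bm{v})/||\bm{v}||_V$, and fix an arbitrary $\bm{v}\in V$ with $\bm{v}\neq\bm{0}$.

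Next, for the fixed $\bm{v}$ I would bound the numerator $B(\bm{u},\bm{v})$ using the defining property of the optimal test norm \eqref{optimaltestnorm}: since $||\bm{v}||_{V,\,\text{opt}} = \sup_{\bm{w}\in U} B(\bm{w},\bm{v})/||\bm{w}||_U$, one has directly $B(\bm{u},\bm{v}) \leq ||\bm{u}||_U\,||\bm{v}||_{V,\,\text{opt}}$. Then I would invoke Theorem \ref{Voptupperbound}, which gives $||\bm{v}||_{V,\,\text{opt}} \leq C_U ||\bm{v}||_V$ with $C_U = \sqrt{3 + 2||{\bf a}||_{L^\infty(\Omega)}^2}$, to conclude
\[
B(\bm{u},\bm{v}) \leq C_U\, ||\bm{u}||_U\, ||\bm{v}||_V .
\]

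Dividing by $||\bm{v}||_V$ and taking the supremum over all $\bm{v}\in V$ yields $||\bm{u}||_E \leq C_U ||\bm{u}||_U$, as desired. There is no real obstacle here: all the analytic content — controlling the jump norms $||\timejump{v}||_\Gamma$ and $||\timejump{\bm{\tau}}||_\Gamma$ by the field terms of $||\cdot||_V$ and reabsorbing $\varepsilon^{-1}||\bm{\tau}||^2$ and $\varepsilon||\nabla\bigcdot\bm{\tau}||^2$ — was already carried out in the proof of Theorem \ref{Voptupperbound}. The only point requiring a moment's care is the direction of the estimates and the fact that $||\cdot||_{V,\,\text{opt}}$ is genuinely the dual norm of $||\cdot||_U$ with respect to $B$, which is recorded in \eqref{optimaltestnorm}; this is what licenses the bound $B(\bm{u},\bm{v}) \leq ||\bm{u}||_U\,||\bm{v}||_{V,\,\text{opt}}$ used above.
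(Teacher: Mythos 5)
Your proof is correct and follows essentially the same route as the paper: both arguments reduce the statement to Theorem \ref{Voptupperbound} combined with the defining supremum property of $||\cdot||_{V,\,\text{opt}}$ in \eqref{optimaltestnorm}. If anything, your version is slightly more economical, since you only use the trivial inequality $B(\bm{u},\bm{v}) \leq ||\bm{u}||_U\,||\bm{v}||_{V,\,\text{opt}}$, whereas the paper invokes the full duality identity $||\bm{u}||_U = \sup_{\bm{v}\in V} B(\bm{u},\bm{v})/||\bm{v}||_{V,\,\text{opt}}$, of which only the same easy direction is actually needed.
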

\begin{proof}
Theorem \ref{Voptupperbound} immediately implies that
$$||\bm{u}||_U = \sup_{\bm{v} \in V} \frac{B(\bm{u},\bm{v})}{||\bm{v}||_{V,\, \text{opt}}} \geq \frac{1}{C_U}\sup_{\bm{v} \in V} \frac{B(\bm{u},\bm{v})}{||\bm{v}||_{V}} = C_U^{-1}||\bm{u}||_E.$$
\end{proof}

\subsection{Lower Bound}

In order to find a lower bound for $||\bm{u}||_E$, we first need to derive an upper bound for $||\bm{v}||_{V}$; unfortunately, this is somewhat more involved than the upper bound for $||\bm{v}||_{V,\, \text{opt}}$. Here, we follow the approach taken in \cite{DG11} of decomposing the test function $\bm{v} = (v, \bm{\tau}) = (v_0, \bm{\tau}_0) + (v_1, \bm{\tau}_1) =: \bm{v}_0 + \bm{v}_1$ where $\bm{v_0} \in  H^1(\Omega; \,\mathcal{T})\!\times\!H(\text{div}, \,\Omega; \, \mathcal{T})$ satisfies
\begin{equation}
\begin{aligned}
\label{v0eqns}
\nabla \bigcdot \bm{\tau}_0 - {\bf a} \bigcdot  \nabla v_0 & = 0 \qquad \text{on } K, \\
\bm{\tau}_0 + \varepsilon \nabla v_0 &= 0 \qquad \text{on } K,
\end{aligned}
\end{equation}
while $\bm{v}_1 \in H^1_D(\Omega) \! \times \! H_N(\text{div}, \,\Omega)$ satisfies
\begin{equation}
\begin{aligned}
\label{v1eqns}
\nabla \bigcdot \bm{\tau}_1 - {\bf a} \bigcdot  \nabla v_1 & = \nabla \bigcdot \bm{\tau} - {\bf a} \bigcdot  \nabla v &\qquad \text{on } K, \\
\bm{\tau}_1 + \varepsilon \nabla v_1 &= \bm{\tau} + \varepsilon \nabla v &\qquad \text{on } K.
\end{aligned}
\end{equation}
The strategy is then to derive an upper bound for both $||\bm{v}_0||_V$ and $||\bm{v}_1||_V$. 

To that end, we first note that the field terms in $||\cdot||_V$ and $||\cdot||_{V,\,\text{opt}}$ are both identical thus we need only be concerned with bounding $\varepsilon||v_0||^2$, $\varepsilon||\nabla v_0||^2$, $\varepsilon||v_1||^2$ and  $\varepsilon||\nabla v_1||^2$. Additionally, the Poincar{\'e}-Friedrichs inequalities (Theorems \ref{pf} and \ref{brokenpf}) give us control over $\varepsilon||v_0||^2$ and $\varepsilon||v_1||^2$ provided we have bounds for $\varepsilon||\nabla v_0||^2$ and $\varepsilon||\nabla v_1||^2$, respectively. Therefore, in fact, we only need to bound these two gradient norms. We'll begin with the easier of the two norms: $||\bm{v}_1||_V^2$.
\begin{theorem}\label{v1bound}
We have the following upper bound for $\bm{v}_1$:
$$||\bm{v}_1||^2_V \leq \varepsilon||\nabla \bigcdot \bm{\tau} - {\bf a} \bigcdot  \nabla v||^2+(1/2)(5 + 3C_P^2)\varepsilon^{-1}||\bm{\tau} + \varepsilon \nabla v||^2,$$
where $C_P$ is the Poincar{\'e}-Friedrichs constant of Theorem \ref{pf}.
\end{theorem}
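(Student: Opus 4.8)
The plan is to exploit the defining equations \eqref{v1eqns} for $\bm v_1$ directly. Since $\bm v_1=(v_1,\bm\tau_1)\in H^1_D(\Omega)\times H_N(\text{div},\Omega)$ is conforming, the jump norms $\|\timejump{v_1}\|_\Gamma$ and $\|\timejump{\bm\tau_1}\|_\Gamma$ vanish, so $\|\bm v_1\|_{V,\text{opt}}^2 = \varepsilon\|\nabla\bigcdot\bm\tau_1 - {\bf a}\bigcdot\nabla v_1\|^2 + \varepsilon^{-1}\|\bm\tau_1+\varepsilon\nabla v_1\|^2$, and by \eqref{v1eqns} the right-hand sides here are exactly $\varepsilon\|\nabla\bigcdot\bm\tau - {\bf a}\bigcdot\nabla v\|^2 + \varepsilon^{-1}\|\bm\tau+\varepsilon\nabla v\|^2$. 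Thus the two field terms of $\|\bm v_1\|_V^2$ are immediately controlled; what remains, as the text already observes, is to bound $\varepsilon\|\nabla v_1\|^2$ (and then $\varepsilon\|v_1\|^2$ follows from the classical Poincar\'e–Friedrichs inequality, Theorem~\ref{pf}, since $v_1\in H^1_D(\Omega)$, giving $\varepsilon\|v_1\|^2\le C_P^2\,\varepsilon\|\nabla v_1\|^2$).

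The key step is therefore to extract $\varepsilon\|\nabla v_1\|^2$ from the two equations in \eqref{v1eqns}. From the second equation, $\varepsilon\nabla v_1 = (\bm\tau+\varepsilon\nabla v) - \bm\tau_1$, so $\varepsilon^2\|\nabla v_1\|^2 \le 2\|\bm\tau+\varepsilon\nabla v\|^2 + 2\|\bm\tau_1\|^2$, i.e. $\varepsilon\|\nabla v_1\|^2 \le 2\varepsilon^{-1}\|\bm\tau+\varepsilon\nabla v\|^2 + 2\varepsilon^{-1}\|\bm\tau_1\|^2$. So I need a bound on $\varepsilon^{-1}\|\bm\tau_1\|^2$. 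Here I would test the system \eqref{v1eqns} against an appropriate pair: take the $L^2$ inner product of the first equation with $v_1$ and of the second with $\varepsilon^{-1}\bm\tau_1$, and add. Integrating by parts the term $\int_\Omega (\nabla\bigcdot\bm\tau_1)\,v_1\,dx = -\int_\Omega \bm\tau_1\bigcdot\nabla v_1\,dx$ (no boundary contribution, because $v_1\in H^1_D$ and $\bm\tau_1\in H_N(\text{div})$ match up on $\Gamma_D$ and $\Gamma_N$ respectively) produces a cancellation against the $\bm\tau_1\bigcdot\nabla v_1$ term coming from the second equation, leaving, roughly, $\varepsilon^{-1}\|\bm\tau_1\|^2 = \int_\Omega (\nabla\bigcdot\bm\tau - {\bf a}\bigcdot\nabla v)\,v_1\,dx + \varepsilon^{-1}\int_\Omega(\bm\tau+\varepsilon\nabla v)\bigcdot\bm\tau_1\,dx + \int_\Omega ({\bf a}\bigcdot\nabla v_1)v_1\,dx$. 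The last term is the awkward one, but the hypothesis $\nabla\bigcdot{\bf a}\ge 0$ saves it: $\int_\Omega ({\bf a}\bigcdot\nabla v_1)v_1\,dx = \tfrac12\int_\Omega {\bf a}\bigcdot\nabla(v_1^2)\,dx = -\tfrac12\int_\Omega(\nabla\bigcdot{\bf a})\,v_1^2\,dx \le 0$, again using $v_1\in H^1_D$ and the Neumann-type condition so that the boundary term drops. Hence this term can be discarded from the upper bound.

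What is left is to estimate the two remaining terms by Cauchy–Schwarz and absorb: $\int_\Omega(\nabla\bigcdot\bm\tau-{\bf a}\bigcdot\nabla v)v_1\,dx \le \|\nabla\bigcdot\bm\tau-{\bf a}\bigcdot\nabla v\|\,\|v_1\| \le C_P\|\nabla\bigcdot\bm\tau-{\bf a}\bigcdot\nabla v\|\,\|\nabla v_1\|$, and $\varepsilon^{-1}\int_\Omega(\bm\tau+\varepsilon\nabla v)\bigcdot\bm\tau_1\,dx \le \varepsilon^{-1/2}\|\bm\tau+\varepsilon\nabla v\|\cdot\varepsilon^{-1/2}\|\bm\tau_1\|$. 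Rescaling and applying Young's inequality to split off the $\varepsilon^{-1}\|\bm\tau_1\|^2$ and $\varepsilon\|\nabla v_1\|^2$ pieces — and using the first-equation bound $\varepsilon\|\nabla v_1\|^2 \le 2\varepsilon^{-1}\|\bm\tau+\varepsilon\nabla v\|^2 + 2\varepsilon^{-1}\|\bm\tau_1\|^2$ derived above to close the loop — yields $\varepsilon^{-1}\|\bm\tau_1\|^2 \lesssim \varepsilon\|\nabla\bigcdot\bm\tau-{\bf a}\bigcdot\nabla v\|^2 + \varepsilon^{-1}\|\bm\tau+\varepsilon\nabla v\|^2$, and then $\varepsilon\|\nabla v_1\|^2$ is controlled by the same quantities. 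Assembling $\|\bm v_1\|_V^2 = (\text{field terms}) + \varepsilon\|v_1\|^2 + \varepsilon\|\nabla v_1\|^2$ and tracking constants carefully — the Poincar\'e constant $C_P$ enters through $\|v_1\|\le C_P\|\nabla v_1\|$ each time $v_1$ is estimated, which is where the $C_P^2$ factors accumulate — should produce exactly the stated constant $\tfrac12(5+3C_P^2)$ on the $\varepsilon^{-1}\|\bm\tau+\varepsilon\nabla v\|^2$ term with coefficient $1$ on the $\varepsilon\|\nabla\bigcdot\bm\tau-{\bf a}\bigcdot\nabla v\|^2$ term. The main obstacle is bookkeeping: choosing the Young's-inequality weights so that the absorbed terms genuinely cancel and the final constants come out sharp, rather than merely the qualitative estimate; the structural content (the cancellation via integration by parts and the sign from $\nabla\bigcdot{\bf a}\ge0$) is the easy part.
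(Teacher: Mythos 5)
Your reduction of the problem to bounding $\varepsilon\|\nabla v_1\|^2$ (with $\varepsilon\|v_1\|^2$ then following from Theorem \ref{pf}) and your observation that the field terms of $\|\bm{v}_1\|_V^2$ are fixed by \eqref{v1eqns} both match the paper; but your key step does not, and it contains a genuine gap. The paper obtains the gradient bound from the \emph{second} equation of \eqref{v1eqns} alone, pairing $\bm{\tau}_1+\varepsilon\nabla v_1=\bm{\tau}+\varepsilon\nabla v$ with $\bm{\tau}_1-\varepsilon\nabla v_1$, so that no first-equation data ever enters the estimate. You instead test the first equation with $v_1$ and the second with $\varepsilon^{-1}\bm{\tau}_1$, which introduces the term $(\nabla\bigcdot\bm{\tau}-{\bf a}\bigcdot\nabla v,\,v_1)$, and this term cannot be absorbed with the $\varepsilon$-weights the statement requires. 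Your only control of $v_1$ is $\|v_1\|\le C_P\|\nabla v_1\|$ combined with $\varepsilon\nabla v_1=(\bm{\tau}+\varepsilon\nabla v)-\bm{\tau}_1$, i.e. $\|v_1\|\le C_P\varepsilon^{-1}(\|\bm{\tau}+\varepsilon\nabla v\|+\|\bm{\tau}_1\|)$. Writing $g:=\nabla\bigcdot\bm{\tau}-{\bf a}\bigcdot\nabla v$, any Young split of $\varepsilon^{-1}\|g\|\,\|\bm{\tau}+\varepsilon\nabla v\|$ (or of $\varepsilon^{-1}\|g\|\,\|\bm{\tau}_1\|$, which must be absorbed into $\varepsilon^{-1}\|\bm{\tau}_1\|^2$ with an order-one parameter to close your loop) that keeps the $\bm{\tau}+\varepsilon\nabla v$ contribution at weight $\varepsilon^{-1}$ necessarily produces $\|g\|^2$ at weight $\varepsilon^{-1}$ as well. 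Carrying out your scheme therefore yields at best $\|\bm{v}_1\|_V^2\lesssim\varepsilon\|g\|^2+C_P^2\varepsilon^{-1}\|g\|^2+\varepsilon^{-1}\|\bm{\tau}+\varepsilon\nabla v\|^2$, whereas the theorem allows $\|g\|^2$ only at weight $\varepsilon$ with coefficient exactly one (already exhausted by the field term of $\|\bm{v}_1\|_V^2$). The discrepancy is a factor $\varepsilon^{-2}$ in the $g$-weight; no choice of Young parameters removes an $\varepsilon$-scaling mismatch, so this is not ``bookkeeping'' --- it is precisely the loss of robustness the theorem is designed to rule out.

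A secondary problem: your sign argument $({\bf a}\bigcdot\nabla v_1,v_1)=-\tfrac12((\nabla\bigcdot{\bf a})v_1,v_1)\le 0$ silently discards the boundary contribution $\tfrac12\int_{\Gamma_N}({\bf a}\bigcdot\bm{n})\,v_1^2\dif s$. The Neumann-type condition constrains $\bm{\tau}_1\bigcdot\bm{n}$ on $\Gamma_N$, not $v_1$, so this term neither vanishes nor has a sign unless one additionally assumes ${\bf a}\bigcdot\bm{n}\le 0$ on $\Gamma_N$ --- an assumption not present in the statement. Both difficulties are avoided by the paper's route exactly because it never invokes the first equation or the convection field; if you wish to salvage your approach you must find a mechanism that exploits the first equation without paying for $(g,v_1)$ at the $\varepsilon^{-1}$ scale.
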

\begin{proof}
Recall that by design we have
\begin{equation}
\begin{aligned}\notag
\bm{\tau}_1 + \varepsilon \nabla v_1 &= \bm{\tau} + \varepsilon \nabla v &\qquad \text{on } K,
\end{aligned}
\end{equation}
$K \in \mathcal{T}$. Multiplying this by $\bm{\tau}_1 - \varepsilon\nabla v_1$, integrating over $K$ and summing over $K \in \mathcal{T}$ yields
\begin{equation}
\begin{aligned}\notag
||\bm{\tau}_1||^2 + \varepsilon^2||\nabla v_1||^2 &= (\bm{\tau} + \varepsilon \nabla v, \bm{\tau}_1 - \varepsilon\nabla v_1).
\end{aligned}
\end{equation}
Using the Cauchy-Schwarz inequality and the elementary inequality $2ab \leq \kappa\alpha^2 + \kappa^{-1}b^2$ gives
\begin{equation}
\begin{aligned}\notag
||\bm{\tau}_1||^2 + \varepsilon^2||\nabla v_1||^2&\leq (||\bm{\tau}_1||+\varepsilon||\nabla v_1||)||\bm{\tau} + \varepsilon \nabla v|| \\
&\leq ||\bm{\tau}_1||^2 + \frac{1}{4}||\bm{\tau} + \varepsilon \nabla v||^2 + \frac{\varepsilon^2}{2}||\nabla v_1||^2 + \frac{1}{2}||\bm{\tau} + \varepsilon \nabla v||^2.
\end{aligned}
\end{equation}
After rescaling we thus obtain that
\begin{equation}
\begin{aligned}\notag
\varepsilon||\nabla v_1||^2 \leq \frac{3}{2\varepsilon}||\bm{\tau} + \varepsilon \nabla v||^2.
\end{aligned}
\end{equation}
Since $v_1 \in H^1_0(\Omega)$, we can apply the classical Poincar{\'e}-Friedrichs inequality to obtain
$$\varepsilon||v_1||^2 \leq C_P^2\varepsilon||\nabla v_1||^2 \leq \frac{3C_P^2}{2\varepsilon}||\bm{\tau} + \varepsilon \nabla v||^2.$$
Combining the results for $\varepsilon||v_1||^2$ and $\varepsilon||\nabla v_1||^2$ yields the statement of the theorem.
\end{proof}
Next, we turn our attention to proving bounds for $\varepsilon||\nabla v_0||^2$ and, ultimately, $||\bm{v}_0||_V^2$.
To that end, we need the following Helmholtz-type decomposition of $\nabla v_0$:
\begin{equation}
\begin{aligned}\label{helm}
\nabla v_0 = \varepsilon \nabla \psi - {\bf a}\psi + \nabla \!\times \!z,
\end{aligned}
\end{equation}
where $\psi \in \{w \in H^1(\Omega) \, | \, w = 0 \text{ on } \Gamma_D \text{ and } (\varepsilon \nabla w - {\bf a}w) \bigcdot \bm{n}  = \nabla v_0 \bigcdot \bm{n} \text{ on } \Gamma_N \}$ and $z \in H(\text{curl}, \, \Omega)$. To see why this decomposition is useful, observe that
\begin{equation}
\begin{aligned}\label{gradv0bound}
||\nabla v_0||^2 &= \sum_{K \in \mathcal{T}} \int_{K} (\varepsilon \nabla \psi - {\bf a}\psi + \nabla \!\times \!z) \bigcdot \nabla v_0 \dif x  \\
&= \sum_{K \in \mathcal{T}} \int_{K} (-{\bf a}\psi \bigcdot \nabla v_0 - \bm{\tau}_0\bigcdot \nabla \psi) \dif x + \int_{\partial K} v_0\nabla \! \times \! z \bigcdot \bm{n}_K  \dif s \\
&= \sum_{K \in \mathcal{T}} \int_{K} \underbrace{(\nabla \bigcdot \bm{\tau}_0-{\bf a}\bigcdot \nabla v_0)}_{=0}\psi \dif x + \int_{\partial K} (v_0\nabla \! \times \! z \bigcdot \bm{n}_K - \bm{\tau}_0\psi \bigcdot \bm{n}_K) \dif s \\
&=\int_{\Gamma}(\timejump{v}\nabla \! \times \! z \bigcdot \bm{n}  - \timejump{\bm{\tau}}\psi ) \dif s \\
&\leq \varepsilon^{-1/2}||\timejump{v}||_{\Gamma}||\nabla \! \times \! z|| + ||\timejump{\bm{\tau}}||_{\Gamma}||\psi||_{H_{\varepsilon}^1(\Omega; \, \mathcal{T})}.
\end{aligned}
\end{equation}
In other words, the decomposition of $\nabla v_0$ allows us to bound $||\nabla v_0||^2$ in terms of the jump norms $||\timejump{v}||_{\Gamma}$ and $||\timejump{\bm{\tau}}||_{\Gamma}$ along with a few extra additional terms which will themselves need bounding. Bounding these additional terms is the focus of the next two lemmas.

\begin{lemma}\label{psibounds}
We have the following upper bounds for $\psi$:
\begin{equation}
\begin{aligned}\notag
|||\psi|||^2 := \varepsilon||\nabla \psi||^2 + ||\sqrt{\nabla \bigcdot {\bf a} / 2} \,\,\psi||^2 & \leq \varepsilon^{-1}||\nabla v_0||^2,\\
||\psi||^2 & \leq \varepsilon^{-1}C_{\varepsilon}||\nabla v_0||^2,\\
||\psi||^2_{H_{\varepsilon}^1(\Omega; \, \mathcal{T})} & \leq (\varepsilon^{-1}+\varepsilon^{-2}C_{\varepsilon})\, ||\nabla v_0||^2,
\end{aligned}
\end{equation}
where $\displaystyle C_{\varepsilon} := \frac{2}{2C_P^{-2}\varepsilon + \min\limits_{x \in \bar{\Omega}} \nabla \bigcdot {\bf a}(x)}$ and $C_P$ is the Poincar{\'e}-Friedrichs constant of Theorem \ref{pf}.
\end{lemma}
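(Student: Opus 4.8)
The plan is to obtain all three estimates from a single energy identity for $\psi$, so almost all the work goes into establishing that identity. First I would test the Helmholtz-type decomposition \eqref{helm} against $\nabla\psi$ and integrate over $\Omega$; this is meaningful even though $v_0$ is only broken $H^1$, since $\psi\in H^1(\Omega)$ makes $\nabla\psi$ a genuine element of $[L^2(\Omega)]^d$ (and $\nabla v_0=-\varepsilon^{-1}\bm{\tau}_0$ by \eqref{v0eqns} with $\bm{\tau}_0\in H(\text{div},\,\Omega;\,\mathcal{T})$, so the relevant normal traces exist). Because $\nabla\bigcdot(\curl z)=0$, the curl contribution integrates by parts to $\int_{\partial\Omega}(\curl z\bigcdot\bm{n})\psi\dif s$, which vanishes: on $\Gamma_D$ since $\psi=0$ there, and on $\Gamma_N$ because taking normal components in \eqref{helm} and inserting the condition $(\varepsilon\nabla\psi-{\bf a}\psi)\bigcdot\bm{n}=\nabla v_0\bigcdot\bm{n}$ built into the space containing $\psi$ forces $\curl z\bigcdot\bm{n}=0$ on $\Gamma_N$. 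What remains is
$$\varepsilon||\nabla\psi||^2-\int_\Omega{\bf a}\psi\bigcdot\nabla\psi\dif x=\int_\Omega\nabla v_0\bigcdot\nabla\psi\dif x.$$

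Next I would rewrite the convective term via $\psi\nabla\psi=\tfrac12\nabla(\psi^2)$ and integrate by parts once more,
$$-\int_\Omega{\bf a}\psi\bigcdot\nabla\psi\dif x=\tfrac12\int_\Omega(\nabla\bigcdot{\bf a})\psi^2\dif x-\tfrac12\int_{\partial\Omega}({\bf a}\bigcdot\bm{n})\psi^2\dif s\geq||\sqrt{\nabla\bigcdot{\bf a}/2}\,\,\psi||^2,$$
where the boundary integral is discarded using $\psi=0$ on $\Gamma_D$ and the sign of ${\bf a}\bigcdot\bm{n}$ on $\Gamma_N$. Combining this with the energy identity and the Cauchy--Schwarz inequality gives
$$|||\psi|||^2=\varepsilon||\nabla\psi||^2+||\sqrt{\nabla\bigcdot{\bf a}/2}\,\,\psi||^2\leq\int_\Omega\nabla v_0\bigcdot\nabla\psi\dif x\leq||\nabla v_0||\,||\nabla\psi||.$$
Dropping the nonnegative $\nabla\bigcdot{\bf a}$ term yields $\varepsilon||\nabla\psi||\leq||\nabla v_0||$; feeding this back into the right-hand side above produces $|||\psi|||^2\leq\varepsilon^{-1}||\nabla v_0||^2$, the first bound.

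For the second bound I would estimate $||\psi||$ from below inside $|||\psi|||^2$: since $\psi\in H^1_D(\Omega)$, Theorem \ref{pf} gives $\varepsilon||\nabla\psi||^2\geq\varepsilon C_P^{-2}||\psi||^2$, while $||\sqrt{\nabla\bigcdot{\bf a}/2}\,\,\psi||^2\geq\tfrac12\big(\min_{x\in\bar{\Omega}}\nabla\bigcdot{\bf a}(x)\big)||\psi||^2$; since $C_\varepsilon^{-1}=C_P^{-2}\varepsilon+\tfrac12\min_{x\in\bar{\Omega}}\nabla\bigcdot{\bf a}(x)$, adding these and invoking the first bound gives $C_\varepsilon^{-1}||\psi||^2\leq|||\psi|||^2\leq\varepsilon^{-1}||\nabla v_0||^2$, which is the claim. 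The third bound is then immediate from $||\psi||^2_{H_{\varepsilon}^1(\Omega;\,\mathcal{T})}=\varepsilon^{-1}||\psi||^2+\varepsilon||\nabla\psi||^2$: bound the first summand by $\varepsilon^{-1}$ times the second bound and the second summand by the first bound (using $\varepsilon||\nabla\psi||^2\leq|||\psi|||^2$) to arrive at $(\varepsilon^{-1}+\varepsilon^{-2}C_\varepsilon)||\nabla v_0||^2$.

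Apart from the energy identity, everything above is Cauchy--Schwarz, Young's inequality and Poincar\'e--Friedrichs, so I expect the boundary bookkeeping to be essentially the only obstacle. The delicate point is the vanishing of the $\curl z$ term, which hinges on correctly reading the normal trace of $\curl z$ on $\Gamma_N$ from the inhomogeneous condition defining the affine space containing $\psi$; relatedly, one must verify that the convective boundary contribution over $\Gamma_N$ has the correct sign so that it may be discarded, which is where the structure of the boundary condition on $\Gamma_N$ is actually used.
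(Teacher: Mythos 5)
Your proposal is correct and takes essentially the same route as the paper's proof: test the decomposition \eqref{helm} against $\nabla\psi$, turn the convective term into the $\nabla\bigcdot{\bf a}$-weighted term by integration by parts, apply Cauchy--Schwarz and bootstrap the resulting bound on $||\nabla\psi||$ to get the first estimate, then combine the Poincar\'e--Friedrichs inequality with the pointwise lower bound on $\nabla\bigcdot{\bf a}$ (equivalently, the paper's H\"older step with $||(C_P^{-2}\varepsilon+\nabla\bigcdot{\bf a}/2)^{-1}||_{L^\infty(\Omega)}=C_\varepsilon$) for the second, the third being immediate. The only difference is that you spell out the boundary bookkeeping (vanishing of the $\curl z$ contribution via the $\Gamma_N$ condition built into the space containing $\psi$, and the sign of the ${\bf a}\bigcdot\bm{n}$ term) that the paper asserts implicitly when it writes the energy identity and the equality for the convective term.
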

\begin{proof}
Testing \eqref{helm} against $\nabla\psi$ and integrating over $\Omega$ yields
$$\varepsilon ||\nabla \psi||^2 - \int_{\Omega} {\bf a}\psi \bigcdot \nabla \psi \dif x= \int_{\Omega} \nabla v_0 \bigcdot \nabla \psi \dif x.$$
Integration by parts implies that
$$- \int_{\Omega} {\bf a}\psi \bigcdot \nabla \psi \dif x= ||\sqrt{\nabla \bigcdot {\bf a} / 2} \,\,\psi||^2.$$
Using the Cauchy-Schwarz inequality thus gives
$$|||\psi|||^2 \leq ||\nabla v_0||||\nabla\psi||,$$
which implies
$$||\nabla\psi|| \leq \varepsilon^{-1}||\nabla v_0||,$$
which upon substitution yields the first result claimed. Next, notice that the classical Poincar{\'e}-Friedrichs inequality along with the first bound implies that
$$\left|\left|\sqrt{C_P^{-2}\varepsilon + \nabla \bigcdot {\bf a} / 2} \,\,\psi \right|\right|^2 \leq \varepsilon^{-1}||\nabla v_0||^2.$$
Therefore, with the aid of H{\"o}lder's inequality, we have
$$||\psi||^2 = \left|\left|\frac{\sqrt{C_P^{-2}\varepsilon + \nabla \bigcdot {\bf a} / 2} \,\,\psi}{\sqrt{C_P^{-2}\varepsilon + \nabla \bigcdot {\bf a} / 2}}\right|\right|^2 \leq \left|\left|(C_P^{-2}\varepsilon + \nabla \bigcdot {\bf a} / 2)^{-1}\right|\right|_{L^{\infty}(\Omega)}\left|\left|\sqrt{C_P^{-2}\varepsilon + \nabla \bigcdot {\bf a} / 2} \,\,\psi \right|\right|^2.$$
Noting that $\left|\left|(C_P^{-2}\varepsilon + \nabla \bigcdot {\bf a} / 2)^{-1}\right|\right|_{L^{\infty}(\Omega)} = C_{\varepsilon}$ completes proof of the second bound. Finally, recalling that
$$||\psi||^2_{H_{\varepsilon}^1(\Omega; \, \mathcal{T})} =  \varepsilon^{-1}||\psi||^2 + \varepsilon||\nabla\psi||^2,$$
we see that the third bound follows immediately from the first and second bounds.
\end{proof}

\begin{lemma}\label{curlzbound}
We have the following upper bound for $\nabla \!\times \!z$:
$$||\nabla \!\times \!z|| \leq  (1+||{\bf a}||_{L^\infty(\Omega)}\varepsilon^{-1/2}\!\!\sqrt{C_{\varepsilon}})||\nabla v_0||.$$
\end{lemma}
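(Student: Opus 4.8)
The plan is to test the Helmholtz-type decomposition \eqref{helm} against $\nabla \!\times\! z$ itself. Rewriting \eqref{helm} as $\nabla \!\times\! z = \nabla v_0 - \varepsilon\nabla\psi + {\bf a}\psi$ and pairing with $\nabla \!\times\! z$ in $L^2(\Omega)$ gives
\begin{equation}
\begin{aligned}\notag
||\nabla \!\times\! z||^2 = \int_\Omega (\nabla v_0 + {\bf a}\psi)\bigcdot \nabla \!\times\! z \dif x - \varepsilon\int_\Omega \nabla\psi \bigcdot \nabla \!\times\! z \dif x.
\end{aligned}
\end{equation}
The key point is that the last integral vanishes. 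Since $\nabla \bigcdot (\nabla \!\times\! z) = 0$, integration by parts yields
\begin{equation}
\begin{aligned}\notag
\int_\Omega \nabla\psi \bigcdot \nabla \!\times\! z \dif x = \int_{\partial\Omega} \psi\,(\nabla \!\times\! z)\bigcdot\bm{n} \dif s,
\end{aligned}
\end{equation}
and this boundary term is zero: on $\Gamma_D$ we have $\psi = 0$, while on $\Gamma_N$ we have $(\nabla \!\times\! z)\bigcdot\bm{n} = 0$. The latter follows from the boundary condition in the definition of the space containing $\psi$, namely $(\varepsilon\nabla\psi - {\bf a}\psi)\bigcdot\bm{n} = \nabla v_0\bigcdot\bm{n}$ on $\Gamma_N$; taking the normal component of \eqref{helm} along $\Gamma_N$ and subtracting this identity leaves exactly $(\nabla \!\times\! z)\bigcdot\bm{n} = 0$.

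Once the gradient term has been discarded, the estimate is immediate: Cauchy--Schwarz together with H\"older's inequality on the convection term gives
\begin{equation}
\begin{aligned}\notag
||\nabla \!\times\! z||^2 \leq \big(||\nabla v_0|| + ||{\bf a}||_{L^\infty(\Omega)}\,||\psi||\big)\,||\nabla \!\times\! z||,
\end{aligned}
\end{equation}
hence $||\nabla \!\times\! z|| \leq ||\nabla v_0|| + ||{\bf a}||_{L^\infty(\Omega)}\,||\psi||$, and inserting the second bound of Lemma \ref{psibounds}, $||\psi|| \leq \varepsilon^{-1/2}\sqrt{C_\varepsilon}\,||\nabla v_0||$, produces the claimed inequality.

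The main thing to be careful about is the meaning of the two normal traces used above. This is harmless because $\nabla \!\times\! z$ is divergence-free, hence belongs to $H(\text{div},\Omega)$, so it possesses a normal trace in $H^{-1/2}(\partial\Omega)$ and the integration-by-parts identity against $\psi \in H^1(\Omega)$ is valid; likewise $\nabla v_0 \bigcdot \bm{n}$ is meaningful elementwise because $\bm{\tau}_0 = -\varepsilon\nabla v_0 \in H(\text{div},K)$ by \eqref{v0eqns}. With these interpretations in place, splitting $\partial\Omega$ into $\overline{\Gamma}_D$ and $\overline{\Gamma}_N$ and using the respective vanishing conditions kills the boundary term, and the rest is the one-line energy argument above; so I expect this trace bookkeeping --- rather than any genuine analytical difficulty --- to be the only obstacle.
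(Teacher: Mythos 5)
Your proposal is correct and follows essentially the same route as the paper: test the decomposition \eqref{helm} against $\nabla\!\times\! z$, drop the $\varepsilon(\nabla\psi,\nabla\!\times\! z)$ term, apply Cauchy--Schwarz and H\"older, and insert the $L^2$ bound for $\psi$ from Lemma \ref{psibounds}. The only difference is that you explicitly verify the orthogonality $(\nabla\psi,\nabla\!\times\! z)=0$ via integration by parts and the boundary conditions on $\Gamma_D$ and $\Gamma_N$, a step the paper uses implicitly when writing its first identity.
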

\begin{proof}
Testing \eqref{helm} against $\nabla \!\times \!z$  and integrating over $\Omega$ yields
$$||\nabla \!\times \!z||^2 = (\nabla v_0, \nabla \!\times \!z) + ({\bf a}\psi,\nabla \!\times \!z).$$
Using the Cauchy-Schwarz inequality and H{\"o}lder's inequality we obtain
$$||\nabla \!\times \!z|| \leq ||\nabla v_0||+||{\bf a}||_{L^\infty(\Omega)}||\psi||.$$
The result then follows from the $L^2$ bound for $\psi$ of Lemma \ref{psibounds}.
\end{proof}

In light of \eqref{gradv0bound} and with these two intermediate results in place, we are now ready to state and prove our upper bound for $||\bm{v}_0||_V$.

\begin{theorem}\label{v0bound}
We have the following upper bound for $\bm{v}_0$:
$$||\bm{v}_0||^2_V \leq 2\!\left\{1+C_P^2+(1+2C_P^2)(1+||{\bf a}||_{L^\infty(\Omega)}\varepsilon^{-1/2}\!\!\sqrt{C_{\varepsilon}})^2 \right\}\!||\timejump{v}||^2_{\Gamma} + 2(1+2C_P^2)(1+\varepsilon^{-1}C_{\varepsilon})\, ||\timejump{\bm{\tau}}||^2_{\Gamma},$$
where $\displaystyle C_{\varepsilon} = \frac{2}{2C_P^{-2}\varepsilon + \min\limits_{x \in \bar{\Omega}} \nabla \bigcdot {\bf a}(x)}$ and $C_P$ is the Poincar{\'e}-Friedrichs constant of Theorem \ref{pf}.
\end{theorem}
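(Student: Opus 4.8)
The plan is to assemble the bound for $\|\bm{v}_0\|_V^2$ out of the four pieces $\varepsilon\|\nabla\cdot\bm{\tau}_0 - \mathbf{a}\cdot\nabla v_0\|^2$, $\varepsilon^{-1}\|\bm{\tau}_0 + \varepsilon\nabla v_0\|^2$, $\varepsilon\|v_0\|^2$ and $\varepsilon\|\nabla v_0\|^2$. The first two pieces vanish identically by the defining equations \eqref{v0eqns} for $\bm{v}_0$, so $\|\bm{v}_0\|_V^2 = \varepsilon\|v_0\|^2 + \varepsilon\|\nabla v_0\|^2$, and the whole task reduces to bounding $\varepsilon\|\nabla v_0\|^2$ (the term $\varepsilon\|v_0\|^2$ then follows from the broken Poincar\'e--Friedrichs inequality, Theorem~\ref{brokenpf}, applied to $v_0$, which introduces the $\varepsilon^{-1}$ weight that cancels the $\varepsilon$ prefactor to leave a clean multiple of the jump norms plus a multiple of $\varepsilon\|\nabla v_0\|^2$ — the same quantity we are already estimating). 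So everything hinges on the single estimate for $\varepsilon\|\nabla v_0\|^2$.

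For that estimate I would start from the chain \eqref{gradv0bound}, which already gives
$$\|\nabla v_0\|^2 \leq \varepsilon^{-1/2}\|\timejump{v}\|_\Gamma \|\nabla\times z\| + \|\timejump{\bm{\tau}}\|_\Gamma \|\psi\|_{H^1_\varepsilon(\Omega;\mathcal{T})}.$$
Now substitute the bound for $\|\nabla\times z\|$ from Lemma~\ref{curlzbound} and the bound for $\|\psi\|_{H^1_\varepsilon(\Omega;\mathcal{T})}^2$ from Lemma~\ref{psibounds}; both are of the form (constant)$\cdot\|\nabla v_0\|$, so the right-hand side becomes $\|\nabla v_0\|$ times $\big(\varepsilon^{-1/2}(1+\|\mathbf{a}\|_{L^\infty}\varepsilon^{-1/2}\sqrt{C_\varepsilon})\|\timejump{v}\|_\Gamma + \sqrt{\varepsilon^{-1}+\varepsilon^{-2}C_\varepsilon}\,\|\timejump{\bm{\tau}}\|_\Gamma\big)$. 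Dividing through by $\|\nabla v_0\|$ (the case $\nabla v_0 = 0$ being trivial) gives a bound on $\|\nabla v_0\|$ itself; squaring it and using $(a+b)^2 \leq 2a^2 + 2b^2$ to split the two jump-norm contributions yields
$$\varepsilon\|\nabla v_0\|^2 \leq 2(1+\|\mathbf{a}\|_{L^\infty}\varepsilon^{-1/2}\sqrt{C_\varepsilon})^2\|\timejump{v}\|_\Gamma^2 + 2(1+\varepsilon^{-1}C_\varepsilon)\|\timejump{\bm{\tau}}\|_\Gamma^2,$$
where the $\varepsilon^{-1/2}$ outside has been absorbed by multiplying through by $\varepsilon$ and the $\varepsilon^{-1}$ in $\sqrt{\varepsilon^{-1}+\varepsilon^{-2}C_\varepsilon}$ has likewise been absorbed into the $\varepsilon$ prefactor.

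The last step is to feed this into the broken Poincar\'e--Friedrichs inequality for $v_0$. Theorem~\ref{brokenpf} gives $\|v_0\| \leq C_P\|\nabla v_0\| + \varepsilon^{-1/2}\sqrt{1+C_P^2}\,\|\timejump{v}\|_\Gamma$, so squaring and using $(a+b)^2\leq 2a^2+2b^2$ produces $\varepsilon\|v_0\|^2 \leq 2C_P^2 \varepsilon\|\nabla v_0\|^2 + 2(1+C_P^2)\|\timejump{v}\|_\Gamma^2$. Adding this to $\varepsilon\|\nabla v_0\|^2$ gives $\|\bm{v}_0\|_V^2 = \varepsilon\|v_0\|^2 + \varepsilon\|\nabla v_0\|^2 \leq (1+2C_P^2)\,\varepsilon\|\nabla v_0\|^2 + 2(1+C_P^2)\|\timejump{v}\|_\Gamma^2$, and inserting the estimate for $\varepsilon\|\nabla v_0\|^2$ from the previous paragraph and collecting the coefficients of $\|\timejump{v}\|_\Gamma^2$ (namely $2(1+C_P^2) + 2(1+2C_P^2)(1+\|\mathbf{a}\|_{L^\infty}\varepsilon^{-1/2}\sqrt{C_\varepsilon})^2$) and of $\|\timejump{\bm{\tau}}\|_\Gamma^2$ (namely $2(1+2C_P^2)(1+\varepsilon^{-1}C_\varepsilon)$) gives exactly the claimed bound. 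The main obstacle is purely bookkeeping: one has to be careful about where the $\varepsilon^{\pm 1/2}$ factors land when combining the $H^1_\varepsilon$-weighted bounds of Lemma~\ref{psibounds} and Lemma~\ref{curlzbound} with the $\varepsilon^{-1/2}$ already sitting in front of $\|\timejump{v}\|_\Gamma$ in \eqref{gradv0bound}, since a misplaced power of $\varepsilon$ would destroy the favorable scaling. No genuinely new idea is needed beyond the Helmholtz decomposition \eqref{helm} and the two lemmas, which have already done the heavy lifting.
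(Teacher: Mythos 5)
Your proposal is correct and follows essentially the same route as the paper: substitute Lemma \ref{psibounds} and Lemma \ref{curlzbound} into \eqref{gradv0bound}, cancel a factor of $||\nabla v_0||$, square with $(a+b)^2\le 2a^2+2b^2$ and multiply by $\varepsilon$, then control $\varepsilon||v_0||^2$ via the broken Poincar\'e--Friedrichs inequality and add the two bounds. Your only addition is to state explicitly that the field terms of $||\bm{v}_0||_V^2$ vanish by \eqref{v0eqns}, which the paper leaves implicit; the bookkeeping and final constants match exactly.
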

\begin{proof}
Substituting the results of Lemma \ref{psibounds} and Lemma \ref{curlzbound} into \eqref{gradv0bound} yields
\begin{equation}
\begin{aligned}\notag
||\nabla v_0||^2 \leq (\varepsilon^{-1/2}+||{\bf a}||_{L^\infty(\Omega)}\varepsilon^{-1}\!\!\sqrt{C_{\varepsilon}})||\timejump{v}||_{\Gamma}||\nabla v_0|| + \sqrt{\varepsilon^{-1}+\varepsilon^{-2}C_{\varepsilon}}\, ||\timejump{\bm{\tau}}||_{\Gamma}||\nabla v_0||.
\end{aligned}
\end{equation}
Cancelling terms, using the trivial inequality $2ab \leq a^2 + b^2$ and multiplying by $\varepsilon$ yields
\begin{equation}
\begin{aligned}\notag
\varepsilon||\nabla v_0||^2 \leq 2(1+||{\bf a}||_{L^\infty(\Omega)}\varepsilon^{-1/2}\!\!\sqrt{C_{\varepsilon}})^2||\timejump{v}||^2_{\Gamma} + 2(1+\varepsilon^{-1}C_{\varepsilon})\, ||\timejump{\bm{\tau}}||^2_{\Gamma}.
\end{aligned}
\end{equation}
The discrete Poincar{\'e}-Friedrichs inequality (Theorem \ref{brokenpf}) gives us control over $\varepsilon||v_0||^2$, viz., 
\begin{equation}
\begin{aligned}\notag
\varepsilon||v_0||^2 & \leq 2C^2_P\varepsilon ||\nabla v_0||^2+ 2(1+C_P^2)||\timejump{v}||^2_{\Gamma} \\
&\leq 2\!\left\{1+C_P^2+2C_P^2(1+||{\bf a}||_{L^\infty(\Omega)}\varepsilon^{-1/2}\!\!\sqrt{C_{\varepsilon}})^2 \right\}\!||\timejump{v}||^2_{\Gamma}+ 4C_P^2(1+\varepsilon^{-1}C_{\varepsilon})\, ||\timejump{\bm{\tau}}||^2_{\Gamma}.
\end{aligned}
\end{equation}
The result then follows trivially by combining the bounds for $\varepsilon||v_0||^2$ and $\varepsilon||\nabla v_0||^2$.
\end{proof}

With the upper bounds for $||\bm{v}_0||_V$ and $||\bm{v}_1||_V$ in place, we can now bound $||\bm{v}||_V$ itself.
\begin{theorem}\label{Vupperbound}
For any $\bm{v}\in V$ we have
$$||\bm{v}||_V \leq \sqrt{2\varepsilon||\nabla \bigcdot \bm{\tau} - {\bf a} \bigcdot  \nabla v||^2+(5+3C_P^2)\varepsilon^{-1}||\bm{\tau} + \varepsilon \nabla v||^2 + C_L^2(||\timejump{v}||^2_\Gamma + ||\timejump{\bm{\tau}}||^2_\Gamma)},$$
with $C_L := 2\sqrt{\max\{(1+2C_P^2)(1+\varepsilon^{-1}C_{\varepsilon}), 1 + C_P^2 + (1+2C_P^2)(1+||{\bf a}||_{L^{\infty}(\Omega)}\varepsilon^{-1/2}C_{\varepsilon})^2\}}$ where $C_P$ is the Poincar{\'e}-Friedrichs constant of Theorem \ref{pf} and $\displaystyle C_{\varepsilon} = \frac{2}{2C_P^{-2}\varepsilon + \min\limits_{x \in \bar{\Omega}} \nabla \bigcdot {\bf a}(x)}$.
\end{theorem}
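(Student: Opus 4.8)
The plan is to assemble the bound directly from the two preceding estimates, Theorem~\ref{v1bound} and Theorem~\ref{v0bound}, using the splitting $\bm{v} = \bm{v}_0 + \bm{v}_1$ already introduced in \eqref{v0eqns}--\eqref{v1eqns} (so no fresh existence question arises: it is the very decomposition on which those two theorems rest). Since $||\cdot||_V$ is a norm, the triangle inequality together with the elementary bound $(a+b)^2 \leq 2(a^2+b^2)$ gives $||\bm{v}||_V^2 \leq 2||\bm{v}_0||_V^2 + 2||\bm{v}_1||_V^2$, and the whole task then reduces to inserting the bounds already established for the two pieces.

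First I would invoke Theorem~\ref{v1bound}, which yields $2||\bm{v}_1||_V^2 \leq 2\varepsilon||\nabla \bigcdot \bm{\tau} - {\bf a} \bigcdot \nabla v||^2 + (5+3C_P^2)\varepsilon^{-1}||\bm{\tau} + \varepsilon\nabla v||^2$; these are exactly the field contributions appearing in the claimed inequality, which is no accident: by construction \eqref{v0eqns} forces $\nabla \bigcdot \bm{\tau}_0 - {\bf a}\bigcdot\nabla v_0 = 0$ and $\bm{\tau}_0 + \varepsilon\nabla v_0 = 0$, so the entire first-order residual of $\bm{v}$ is carried by $\bm{v}_1$, while $||\bm{v}_0||_V^2$ collapses to $\varepsilon||v_0||^2 + \varepsilon||\nabla v_0||^2$ and $\nabla \bigcdot \bm{\tau}_1 - {\bf a}\bigcdot\nabla v_1 = \nabla \bigcdot \bm{\tau} - {\bf a}\bigcdot\nabla v$, $\bm{\tau}_1 + \varepsilon\nabla v_1 = \bm{\tau} + \varepsilon\nabla v$. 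Next I would invoke Theorem~\ref{v0bound}: doubling its right-hand side and bounding each of its two coefficients by their common maximum gives $2||\bm{v}_0||_V^2 \leq C_L^2\bigl(||\timejump{v}||_\Gamma^2 + ||\timejump{\bm{\tau}}||_\Gamma^2\bigr)$, because $C_L^2$ is, by the very definition in the statement, four times that maximum. Adding the two contributions and taking a square root then produces the asserted estimate.

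The hard part has, in effect, already been carried out: the Helmholtz-type decomposition \eqref{helm}, the auxiliary bounds on $\psi$ and $\nabla\!\times\!z$ (Lemmas~\ref{psibounds} and \ref{curlzbound}), and the broken Poincar{\'e}-Friedrichs inequality (Theorem~\ref{brokenpf}) all feed into Theorems~\ref{v1bound} and~\ref{v0bound}, so Theorem~\ref{Vupperbound} is purely their synthesis, with no genuine analytic obstacle remaining. The only two points deserving care are (i) that the splitting $\bm{v} = \bm{v}_0 + \bm{v}_1$ is well-defined, which is inherited from the construction in \eqref{v0eqns}--\eqref{v1eqns}; and (ii) the constant bookkeeping, namely confirming that after the factor $2$ from the triangle inequality both coefficients of Theorem~\ref{v0bound} are absorbed into the single constant $C_L^2$, which is immediate once one observes that $C_L^2$ was designed precisely as four times the larger of the two coefficient expressions appearing there.
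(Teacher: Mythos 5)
Your proposal is correct and follows essentially the same route as the paper's own proof, which likewise splits $\bm{v} = \bm{v}_0 + \bm{v}_1$, applies the elementary inequality $||\bm{v}||_V^2 \leq 2||\bm{v}_0||_V^2 + 2||\bm{v}_1||_V^2$, and inserts Theorems~\ref{v1bound} and~\ref{v0bound}. The only bookkeeping caveat is that the theorem's stated $C_L$ contains $\varepsilon^{-1/2}C_{\varepsilon}$ where Theorem~\ref{v0bound} actually produces $\varepsilon^{-1/2}\sqrt{C_{\varepsilon}}$, an inconsistency in the paper's own statement (your identification of $C_L^2$ as four times the larger coefficient of Theorem~\ref{v0bound} is what was evidently intended), not a gap in your argument.
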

\begin{proof}
Since we have the decomposition $\bm{v} = \bm{v}_0 + \bm{v}_1$, we can apply the elementary inequality
$$||\bm{v}||_V \leq \sqrt{2||\bm{v}_0||_V^2 + 2||\bm{v}_1||_V^2},$$
along with Theorem \ref{v1bound} and Theorem \ref{v0bound} and to deduce the result.
\end{proof}

As with the upper bound for $||\cdot||_E$; upon having deduced an upper bound for $||\cdot||_V$ in Theorem \ref{Vupperbound}, proving a lower bound for $||\cdot||_E$ is now trivial.

\begin{theorem}\label{unormupperbound}
For any $\bm{u} \in U$ we have the lower bound 
\begin{equation}\notag
\begin{aligned}
||\bm{u}||_E & \geq \sqrt{\frac{1}{2\varepsilon}||u||^2 + \frac{\varepsilon}{5+3C_P^2}||\bm{\sigma}||^2 + C_L^{-2}||\hat{u}||_{H_D^{1/2}(\Gamma)}^2 + C_L^{-2}||\hat{\sigma}_n||_{H_N^{-1/2}(\Gamma)}^2 },
\end{aligned}
\end{equation}
with $C_L = 2\sqrt{\max\{(1+2C_P^2)(1+\varepsilon^{-1}C_{\varepsilon}), 1 + C_P^2 + (1+2C_P^2)(1+||{\bf a}||_{L^{\infty}(\Omega)}\varepsilon^{-1/2}C_{\varepsilon})^2\}}$ where $C_P$ is the Poincar{\'e}-Friedrichs constant of Theorem \ref{pf} and $\displaystyle C_{\varepsilon} = \frac{2}{2C_P^{-2}\varepsilon + \min\limits_{x \in \bar{\Omega}} \nabla \bigcdot {\bf a}(x)}$.
\end{theorem}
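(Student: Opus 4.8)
The plan is to reproduce, essentially verbatim, the one-line duality argument by which Theorem~\ref{unormlowerbound} was obtained from Theorem~\ref{Voptupperbound}, but now fed with the upper bound on $||\bm{v}||_V$ from Theorem~\ref{Vupperbound} in place of the upper bound on $||\bm{v}||_{V,\,\text{opt}}$. Write $Q(\bm{v})$ for the quadratic form under the square root in Theorem~\ref{Vupperbound},
\[
Q(\bm{v}) := 2\varepsilon||\nabla \bigcdot \bm{\tau} - {\bf a} \bigcdot  \nabla v||^2 + (5+3C_P^2)\varepsilon^{-1}||\bm{\tau} + \varepsilon \nabla v||^2 + C_L^2\big(||\timejump{v}||_\Gamma^2 + ||\timejump{\bm{\tau}}||_\Gamma^2\big),
\]
so that Theorem~\ref{Vupperbound} reads $||\bm{v}||_V \le \sqrt{Q(\bm{v})}$. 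Plugging this into the definition \eqref{energynorm} of the energy norm immediately gives
\[
||\bm{u}||_E \;=\; \sup_{\bm{v}\in V}\frac{B(\bm{u},\bm{v})}{||\bm{v}||_V} \;\ge\; \sup_{\bm{v}\in V}\frac{B(\bm{u},\bm{v})}{\sqrt{Q(\bm{v})}},
\]
and the whole task reduces to evaluating the last supremum.

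The key observation I would exploit is that $\sqrt{Q(\,\cdot\,)}$ is itself an optimal test norm --- not the optimal test norm for $||\cdot||_U$, but the one for the reweighted trial norm
\[
||\bm{u}||_{U,w} := \sqrt{\tfrac{1}{2\varepsilon}||u||^2 + \tfrac{\varepsilon}{5+3C_P^2}||\bm{\sigma}||^2 + C_L^{-2}||\hat{u}||_{H_D^{1/2}(\Gamma)}^2 + C_L^{-2}||\hat{\sigma}_n||_{H_N^{-1/2}(\Gamma)}^2},
\]
which is precisely the expression on the right-hand side of the statement. To establish this I would simply re-run the computation that yields \eqref{actualoptimaltestnorm} from $||\cdot||_U$ (cf.\ \cite{C13, CHBTD14}): in \eqref{bilinearform} the four mutually independent components $u$, $\bm{\sigma}$, $\hat{\sigma}_n$, $-\hat{u}$ of $\bm{u}$ are paired respectively with $\nabla \bigcdot \bm{\tau} - {\bf a} \bigcdot \nabla v$, $\bm{\tau}+\varepsilon\nabla v$, $\timejump{v}$ and $\timejump{\bm{\tau}}$, so that, $U$ being a product Hilbert space, $\sup_{\bm{u}} B(\bm{u},\bm{v})/||\bm{u}||_{U,w}$ splits into the $\ell^2$-combination of the four individual dual norms; the definitions of $||\timejump{v}||_\Gamma$ and $||\timejump{\bm{\tau}}||_\Gamma$ are exactly the dual norms of the two minimal-extension trace norms, and rescaling by a positive constant leaves these trace dualities intact, so carrying the weights through their reciprocals yields $\sup_{\bm{u}} B(\bm{u},\bm{v})/||\bm{u}||_{U,w} = \sqrt{Q(\bm{v})}$.

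It then only remains to apply the bidual identity already used in the proof of Theorem~\ref{unormlowerbound} --- that the energy norm generated by an optimal test norm reproduces the trial norm it came from, which holds here because the ultra-weak formulation \eqref{ultraweakshort} is well posed and, in particular, $B(\bm{u},\bm{v})=0$ for all $\bm{v}$ forces $\bm{u}=0$ as required in Theorem~\ref{DPGconvg} --- now to the pair $(||\cdot||_{U,w},\,\sqrt{Q(\,\cdot\,)})$. This gives $\sup_{\bm{v}\in V} B(\bm{u},\bm{v})/\sqrt{Q(\bm{v})} = ||\bm{u}||_{U,w}$, which combined with the preliminary inequality is the claimed lower bound. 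The only place that demands any care is the middle step --- redoing the derivation of the optimal test norm with the three constants $2$, $5+3C_P^2$ and $C_L^2$ inserted, and making sure the trace dualities survive the rescaling --- together with checking that the well-posedness needed for the bidual identity is on hand; neither is a real obstacle, so this theorem is as direct a corollary of Theorem~\ref{Vupperbound} as Theorem~\ref{unormlowerbound} was of Theorem~\ref{Voptupperbound}.
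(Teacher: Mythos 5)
Your argument is essentially the paper's own proof: the paper likewise replaces $||\bm{v}||_V$ in the definition of $||\bm{u}||_E$ by the bound of Theorem~\ref{Vupperbound} and then invokes ``an elementary calculation,'' which is precisely the weighted-duality step you spell out (the weighted test norm is the optimal test norm of the reweighted trial norm, so the supremum reproduces that trial norm). So the proposal is correct and follows the same route, merely making explicit the duality computation the paper leaves implicit.
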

\begin{proof}
The results of Theorem \ref{Vupperbound} imply that
\begin{equation}\notag
\begin{aligned}
||\bm{u}||_E & = \sup_{\bm{v} \in V} \frac{B(\bm{u},\bm{v})}{||\bm{v}||_V} \\
&\geq \sup_{\bm{v} \in V} \frac{B(\bm{u},\bm{v})}{\sqrt{2\varepsilon||\nabla \bigcdot \bm{\tau} - {\bf a} \bigcdot  \nabla v||^2+(5+3C_P^2)\varepsilon^{-1}||\bm{\tau} + \varepsilon \nabla v||^2 + C_L^2(||\timejump{v}||^2_\Gamma + ||\timejump{\bm{\tau}}||^2_\Gamma)}},
\end{aligned}
\end{equation}
after which an elementary calculation yields the result claimed.
\end{proof}

\subsection{A Robust Norm} We now turn our attention to collecting the results of the two previous subsections in a readable way thus presenting the main result of this paper. To that end, we use the notation $\lesssim$ and $\gtrsim$ to denote inequalities which are true up to some constant independent of $\varepsilon$. Additionally, we will compare our results with those of the other robust norm \cite{C13, CHBTD14} currently in the literature. We begin with a theorem stating what we have proved.
\begin{theorem}\label{maintheorem}
Under the assumption that $||{\bf a}||_{L^{\infty}(\Omega)}$ is order one, we have the robust bounds
\begin{equation}\notag
\begin{aligned}
||\bm{u}||_E &\lesssim \sqrt{{\varepsilon}^{-1}||u||^2 + \varepsilon||\bm{\sigma}||^2 + ||\hat{u}||_{H_D^{1/2}(\Gamma)}^2 + ||\hat{\sigma}_n||_{H_N^{-1/2}(\Gamma)}^2}, \\
||\bm{u}||_E &\gtrsim \sqrt{{\varepsilon}^{-1}||u||^2 + \varepsilon||\bm{\sigma}||^2 + \varepsilon C^{-1}_{\varepsilon}||\hat{u}||_{H_D^{1/2}(\Gamma)}^2 + \varepsilon C^{-1}_{\varepsilon}||\hat{\sigma}_n||_{H_N^{-1/2}(\Gamma)}^2},
\end{aligned}
\end{equation}
where $\displaystyle C^{-1}_{\varepsilon} = C_P^{-2}\varepsilon + (1/2)\min\limits_{x \in \bar{\Omega}} \nabla \bigcdot {\bf a}(x)$ and $C_P$ is the Poincar{\'e}-Friedrichs constant of Theorem \ref{pf}. If $\displaystyle \min_{x \in \bar{\Omega}} \nabla \bigcdot {\bf a}(x) = 0$ then for the lower bound we have
\begin{equation}\notag
\begin{aligned}
||\bm{u}||_E &\gtrsim \sqrt{\varepsilon^{-1}||u||^2 + {\varepsilon}||\bm{\sigma}||^2 + \varepsilon^2||\hat{u}||_{H_D^{1/2}(\Gamma)}^2 + \varepsilon^2||\hat{\sigma}_n||_{H_N^{-1/2}(\Gamma)}^2}.
\end{aligned}
\end{equation}
If, however, $\displaystyle \min_{x \in \bar{\Omega}} \nabla \bigcdot {\bf a}(x) > 0$ and is order one then we have the improved lower bound
\begin{equation}\notag
\begin{aligned}
||\bm{u}||_E &\gtrsim \sqrt{\varepsilon^{-1}||u||^2 + {\varepsilon}||\bm{\sigma}||^2 + \varepsilon||\hat{u}||_{H_D^{1/2}(\Gamma)}^2 + \varepsilon||\hat{\sigma}_n||_{H_N^{-1/2}(\Gamma)}^2 }.
\end{aligned}
\end{equation}
\end{theorem}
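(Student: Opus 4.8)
The plan is to assemble Theorem~\ref{unormlowerbound} and Theorem~\ref{unormupperbound} and then extract the $\varepsilon$-dependence of the constants appearing in them; no new estimates are needed, only a little asymptotic bookkeeping.

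For the upper bound I would simply invoke Theorem~\ref{unormlowerbound}, which gives $\|\bm{u}\|_E \le C_U\|\bm{u}\|_U$ with $C_U = \sqrt{3 + 2\|{\bf a}\|_{L^\infty(\Omega)}^2}$; under the standing assumption that $\|{\bf a}\|_{L^\infty(\Omega)}$ is order one this $C_U$ is independent of $\varepsilon$, and expanding the definition~\eqref{unorm} of $\|\cdot\|_U$ then yields exactly the claimed robust upper bound $\|\bm{u}\|_E \lesssim \sqrt{\varepsilon^{-1}\|u\|^2 + \varepsilon\|\bm{\sigma}\|^2 + \|\hat{u}\|_{H_D^{1/2}(\Gamma)}^2 + \|\hat{\sigma}_n\|_{H_N^{-1/2}(\Gamma)}^2}$.

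For the lower bound I would start from Theorem~\ref{unormupperbound}, i.e.\ $\|\bm{u}\|_E \ge \sqrt{\tfrac{1}{2\varepsilon}\|u\|^2 + \tfrac{\varepsilon}{5+3C_P^2}\|\bm{\sigma}\|^2 + C_L^{-2}\|\hat{u}\|_{H_D^{1/2}(\Gamma)}^2 + C_L^{-2}\|\hat{\sigma}_n\|_{H_N^{-1/2}(\Gamma)}^2}$, and note that the first two coefficients already coincide with $\varepsilon^{-1}$ and $\varepsilon$ up to the fixed factors $1/2$ and $1/(5+3C_P^2)$. The crux is therefore the claim $C_L^{-2}\gtrsim \varepsilon C_\varepsilon^{-1}$, equivalently $C_L^2\lesssim \varepsilon^{-1}C_\varepsilon$. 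Recalling from Theorem~\ref{Vupperbound} that $C_L^2 = 4\max\{(1+2C_P^2)(1+\varepsilon^{-1}C_\varepsilon),\; 1+C_P^2+(1+2C_P^2)(1+\|{\bf a}\|_{L^\infty(\Omega)}\varepsilon^{-1/2}\sqrt{C_\varepsilon})^2\}$ and that $C_\varepsilon^{-1} = C_P^{-2}\varepsilon + \tfrac12\min_{x\in\bar\Omega}\nabla\bigcdot{\bf a}(x)$, the key observation is that, as long as $\varepsilon$ ranges over a bounded set, $\varepsilon^{-1}C_\varepsilon = (C_P^{-2}\varepsilon^2 + \tfrac12\varepsilon\min_{x\in\bar\Omega}\nabla\bigcdot{\bf a}(x))^{-1}$ is bounded below by an $\varepsilon$-independent positive constant, so that $1 \lesssim \varepsilon^{-1}C_\varepsilon$. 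Using this, the first entry of the $\max$ is $\le (1+2C_P^2)(1+\varepsilon^{-1}C_\varepsilon) \lesssim \varepsilon^{-1}C_\varepsilon$, while for the second entry the elementary bound $(1+\|{\bf a}\|_{L^\infty(\Omega)}\varepsilon^{-1/2}\sqrt{C_\varepsilon})^2 \le 2(1+\|{\bf a}\|_{L^\infty(\Omega)}^2\varepsilon^{-1}C_\varepsilon)$ together with $\|{\bf a}\|_{L^\infty(\Omega)}$ being order one gives $\lesssim 1 + \varepsilon^{-1}C_\varepsilon \lesssim \varepsilon^{-1}C_\varepsilon$; hence $C_L^2 \lesssim \varepsilon^{-1}C_\varepsilon$ and therefore $C_L^{-2} \gtrsim \varepsilon C_\varepsilon^{-1}$, which is precisely the general lower bound.

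The two special cases then follow by substituting the value of $C_\varepsilon^{-1}$. If $\min_{x\in\bar\Omega}\nabla\bigcdot{\bf a}(x) = 0$ then $\varepsilon C_\varepsilon^{-1} = C_P^{-2}\varepsilon^2 \gtrsim \varepsilon^2$, producing the $\varepsilon^2$ scaling on the two trace terms; if instead $\min_{x\in\bar\Omega}\nabla\bigcdot{\bf a}(x) > 0$ and is order one, then $\varepsilon C_\varepsilon^{-1} = C_P^{-2}\varepsilon^2 + \tfrac12\varepsilon\min_{x\in\bar\Omega}\nabla\bigcdot{\bf a}(x) \gtrsim \varepsilon$, producing the improved $\varepsilon$ scaling. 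I do not expect a genuine obstacle here, since everything reduces to substituting constants already computed in the preceding subsections; the only point requiring care is the uniform-in-$\varepsilon$ lower bound $1 \lesssim \varepsilon^{-1}C_\varepsilon$ (which implicitly uses that $\varepsilon$ is bounded above) and correctly identifying which entry of the $\max$ defining $C_L$ is active in each regime.
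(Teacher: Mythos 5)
Your proposal is correct and follows essentially the same route as the paper: both simply combine Theorem~\ref{unormlowerbound} and Theorem~\ref{unormupperbound} and then read off the $\varepsilon$-scaling of $C_U$, $C_L$ and $C_\varepsilon^{-1}$ under the stated assumptions on ${\bf a}$. Your write-up merely makes explicit the bookkeeping step $C_L^{-2}\gtrsim\varepsilon C_\varepsilon^{-1}$ (using $1\lesssim\varepsilon^{-1}C_\varepsilon$ for bounded $\varepsilon$), which the paper leaves implicit in calling the bounds a ``simple consequence'' of the two theorems.
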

\begin{proof}
The first two bounds are a simple consequence of Theorem \ref{unormlowerbound} and Theorem \ref{unormupperbound}. For the specific scaling on the lower bounds, it suffices to notice that if $\displaystyle \min_{x \in \bar{\Omega}} \nabla \bigcdot {\bf a}(x) = 0$ then $C_{\varepsilon}^{-1}$ is of order $\varepsilon$ whereas if $\displaystyle \min_{x \in \bar{\Omega}} \nabla \bigcdot {\bf a}(x) > 0$ and is order one then $C_{\varepsilon}^{-1}$ is also of order one.
\end{proof}

We now compare our results with those of the other major work in this area \cite{C13, CHBTD14}. To do this, we need some notation so that we can introduce their result. Firstly, we recall that the test norm $||\cdot||_{V, \, \text{MD}}$ under consideration in \cite{C13, CHBTD14} is given by
$$||\bm{v}||_{V, \, \text{MD}} := \sqrt{||C_v v||^2 + \varepsilon||\nabla v||^2 + ||{\bf a} \bigcdot \nabla v||^2 + ||C_{\bm{\tau}} \bm{\tau}||^2 + ||\nabla \bigcdot \bm{\tau}||^2},$$
where $\displaystyle C_v |_K := \min\{\sqrt{\varepsilon/|K|}, 1\}$, $C_{\bm{\tau}} |_K := \min\{1/\sqrt{\varepsilon}, 1/\sqrt{|K|}\}$, $K \in \mathcal{T}$. As is the case for our proposed quasi-optimal test norm 
 $$||\bm{v}||_{V}  = \sqrt{\varepsilon||\nabla \bigcdot \bm{\tau} - {\bf a} \bigcdot  \nabla v||^2+\varepsilon^{-1}||\bm{\tau} + \varepsilon \nabla v||^2 + \varepsilon||v||^2 + \varepsilon||\nabla v||^2},$$
  the mesh-dependent norm of \cite{C13, CHBTD14} also has an associated energy norm $||\cdot||_{\widetilde{E}}$ on $U$ given by
$$||\bm{u}||_{\widetilde{E}} := \sup_{\bm{v} \in V} \frac{\widetilde{B}(\bm{u},\bm{v})}{||\bm{v}||_{V, \, \text{MD}}}.$$

We do remark that the bilinear form $\widetilde{B}(\cdot,\cdot)$ used in \cite{C13, CHBTD14} is different from the one used here \eqref{bilinearform}; roughly speaking, we have the approximate relation $\bm{\sigma} \approx \nabla u$ whereas they use $\bm{\sigma} \approx \varepsilon \nabla u$; nevertheless, this does not affect our ability to compare the two different test norms in an abstract way. Indeed, what matters is not the discretization itself but the $\varepsilon$-ratio from the coefficients in front of the upper and lower bounds between the norms, cf., Theorem \ref{DPGconvg}.

For the mesh-dependent test norm $||\cdot||_{V, \, \text{MD}}$, the following result holds (Lemma 1 from \cite{CHBTD14}) which is the analogue of Theorem \ref{maintheorem} for the norm $||\cdot||_V$:
\begin{theorem}\label{channormresult}
For any $\bm{u} \in U$, it holds that
\begin{equation}\notag
\begin{aligned}
||\bm{u}||_{\widetilde{E}} & \lesssim ||u|| + \varepsilon^{-1}||C_{\tau}^{-1}\bm{\sigma}|| + \varepsilon^{-1/2}||\hat{u}||_{H^{1/2}_D(\Gamma)}+\varepsilon^{-1/2}||\hat{\sigma}_n||_{H_N^{-1/2}(\Gamma)}, \\
||\bm{u}||_{\widetilde{E}} & \gtrsim ||u|| + ||\bm{\sigma}|| + \varepsilon||\hat{u}||_{H^{1/2}_D(\Gamma)}+\sqrt{\varepsilon}||\hat{\sigma}_n||_{H_N^{-1/2}(\Gamma)},
\end{aligned}
\end{equation}
provided that $\nabla \bigcdot {\bf a} = 0$, $\nabla \!\times\! {\bf a} = \bm{0}$, $\nabla {\bf a} + \nabla {\bf a^T} - (\nabla \bigcdot {\bf a}) \,I_d$ is order one and that there exists a positive order one constant $C_{\bf{a}}$ such that
$C_{\bf a} \leq |{\bf a}|^2$.
\end{theorem}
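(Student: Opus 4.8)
Strictly speaking, Theorem~\ref{channormresult} is quoted verbatim from \cite{CHBTD14}, where it is Lemma~1, so its proof is a reference to that work; for completeness I indicate how one would reconstruct it, the argument running parallel to the proofs of the two previous subsections. The plan is to recast each of the two inequalities as a one-sided comparison between the mesh-dependent test norm $||\cdot||_{V,\,\text{MD}}$ and the \emph{optimal} test norm dual, in the sense of \eqref{optimaltestnorm}, to the explicit trial norm appearing on the corresponding right-hand side, and then to pass to the energy norm $||\cdot||_{\widetilde{E}}$ by the same duality step used in the proofs of Theorem~\ref{unormlowerbound} and Theorem~\ref{unormupperbound}.

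For the upper bound I would first establish a continuity estimate of the form $\widetilde{B}(\bm{u},\bm{v})\lesssim\big(||u||+\varepsilon^{-1}||C_{\bm{\tau}}^{-1}\bm{\sigma}||+\varepsilon^{-1/2}||\hat{u}||_{H_D^{1/2}(\Gamma)}+\varepsilon^{-1/2}||\hat{\sigma}_n||_{H_N^{-1/2}(\Gamma)}\big)\,||\bm{v}||_{V,\,\text{MD}}$. This is obtained by integrating $\widetilde{B}$ by parts element by element, applying the Cauchy--Schwarz and H\"older inequalities to the resulting volume terms, and pairing the skeleton terms against the minimal extensions that define the trace norms $||\hat{u}||_{H_D^{1/2}(\Gamma)}$ and $||\hat{\sigma}_n||_{H_N^{-1/2}(\Gamma)}$; the mesh weights $C_v$ and $C_{\bm{\tau}}$ are chosen precisely so that the element-size dependence cancels and the stated powers of $\varepsilon$ emerge. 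Dividing by $||\bm{v}||_{V,\,\text{MD}}$ and taking the supremum over $\bm{v}$ then gives the first estimate.

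For the lower bound, which is the delicate direction, I would mimic Theorems~\ref{v1bound}--\ref{Vupperbound}: decompose $\bm{v}=\bm{v}_0+\bm{v}_1$ with $\bm{v}_1$ conforming and carrying the element residuals of $\bm{v}$ while $\bm{v}_0$ solves the homogeneous adjoint system \eqref{v0eqns} elementwise. The $\bm{v}_1$-contribution to $||\bm{v}||_{V,\,\text{MD}}$ is controlled directly by the residual terms of $||\cdot||_{V,\,\text{MD}}$ after a broken Poincar\'e--Friedrichs step (Theorem~\ref{brokenpf}); for $\bm{v}_0$ one uses a Helmholtz-type decomposition in the spirit of \eqref{helm} together with an $\varepsilon$-uniform stability estimate for the adjoint convection--diffusion operator, which bounds $||v_0||$, $||\nabla v_0||$ and $||\bm{\tau}_0||$ in terms of the jump seminorms $||\timejump{v}||_\Gamma$ and $||\timejump{\bm{\tau}}||_\Gamma$. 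Collecting the two contributions yields $||\bm{v}||_{V,\,\text{MD}}\lesssim||\bm{v}||_{V,\,\text{opt}}$ with the appropriate weights, and duality closes the lower bound exactly as in the proof of Theorem~\ref{unormupperbound}.

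The main obstacle, and the reason for the structural hypotheses on ${\bf a}$, is precisely that $\varepsilon$-uniform adjoint-stability estimate in the $\bm{v}_0$ step. In the analysis of $||\cdot||_V$ the reaction contribution $\nabla\bigcdot{\bf a}$ supplied the coercivity exploited in Lemma~\ref{psibounds}, but here $\nabla\bigcdot{\bf a}=0$, so controlling the full gradient of $v_0$ demands a different mechanism; this is where the conditions $\nabla\!\times\!{\bf a}=\bm{0}$, boundedness of $\nabla{\bf a}+\nabla{\bf a}^T-(\nabla\bigcdot{\bf a})I_d$, and the nondegeneracy $C_{\bf a}\le|{\bf a}|^2$ of \cite{C13,CHBTD14} come in: they are what renders the constants in that estimate independent of $\varepsilon$, at the price of the less favorable $\varepsilon$-powers on the trace terms. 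That the bounds for $||\cdot||_V$ in Theorem~\ref{maintheorem} require no such conditions on the convection is the point of the comparison drawn here.
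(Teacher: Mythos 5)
Your treatment matches the paper's: Theorem~\ref{channormresult} is not proved here at all but is quoted as Lemma~1 of \cite{CHBTD14}, so citing that work is exactly what the paper does, and your optional reconstruction sketch (continuity for the upper bound, test-function decomposition plus an $\varepsilon$-uniform adjoint stability estimate for the lower bound, with the hypotheses on $\bf a$ entering precisely there) is consistent with the methodology of that reference and of the analogous arguments in Section~4. Nothing further is required.
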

Before we dive into a direct comparison between the two proposed test norms, we note that the trace norms appearing in this theorem are not the same as those in Theorem \ref{maintheorem}; instead, they are induced by the standard broken ($\varepsilon$-free) $H^1$ and $H(\text{div})$ norms. 

Firstly, we remark that (as has been proved so far) the mesh-dependent test norm $||\cdot||_{V, \, \text{MD}}$ of \cite{CHBTD14} is deficient by a factor of $\varepsilon^{-1/2}$ in the $\bm{\sigma}$ component, $\varepsilon^{-3/2}$ in the $\hat{u}$ component and $\varepsilon^{-1}$ in the $\hat{\sigma}_n$ component whereas our proposed test norm $||\cdot||_V$ is not deficient in the $\bm{\sigma}$ component and is deficient by a factor of $\varepsilon^{-1}$ in both trace components thus our proposed test norm $||\cdot ||_V$ has tighter {\bf proved} bounds than those of the mesh-dependent test norm $||\cdot||_{V, \, \text{MD}}$. In addition to these proved tighter bounds, our proposed test norm $||\cdot||_V$ requires less assumptions in order to achieve these bounds; indeed, we require only that $||{\bf a}||_{L^{\infty}(\Omega)}$ exist and be order one whereas the mesh-dependent test norm $||\cdot||_{V, \, \text{MD}}$ has additional restrictions placed on the convection in order for the bounds of Theorem \ref{channormresult} to hold. Finally, we remark that under the mild additional assumption that $\displaystyle \min_{x \in \bar{\Omega}} \nabla \bigcdot {\bf a}(x) > 0$, our trace norm deficiency can be improved to a factor of only $\varepsilon^{-1/2}$ which may very well be optimal.

At this point, we must stress that what we are not saying is that the proposed test norm $||\cdot||_V$ is better than the mesh-dependent test norm $||\cdot||_{V, \, \text{MD}}$ of \cite{C13, CHBTD14}, only that tighter bounds for $||\cdot||_V$ have been proved under less restrictive conditions on the convection. Indeed, it is worth noting that the authors in \cite{CHBTD14} state that the mesh-dependent test norm $||\cdot||_{V, \, \text{MD}}$ seems to work well under much less restrictive conditions than those required in Theorem \ref{channormresult}. In order to facilitate a comparison, we will apply the two different test norms to a variety of different problems and compare the results in the numerical experiments section.

\section{Numerical Experiments}

Before we begin our numerical experiments, we first need to select our discrete trial space $U_h$. Recall that $u_h \in U_h$, the solution to \eqref{discreteproblem}, satisfies Theorem \ref{DPGconvg} which tells us that choosing the right DPG trial space $U_h$ is just as crucial as selecting a good optimal test norm. In particular, the finite element spaces need to be chosen such that for any polynomial degree $p > 0$ we have
\begin{equation}
\begin{aligned}\label{optimalorder}
\inf_{\bm{w}_h \in U_h} ||\bm{u} - \bm{w}_h||_U = \mathcal{O}(h^{p+1}),
\end{aligned}
\end{equation}
where $\displaystyle h := \max_{K \in \mathcal{T}} \,\text{diam}(K)$ is the \emph{maximum mesh-size}. We thus choose our finite element space to be $U_h := S_h^p \!\times\! \bm{S}_h^p \!\times\! Q_h^{p+1} \!\times\! R_h^{p+1}$ where 
\begin{equation}
\begin{aligned}\notag
S_h^p &:= \{q: \Omega \to \mathbb{R} \,|\, q|_K \in \mathcal{P}^p(K)\},\\
\bm{S}_h^p &:=[S_h^p]^d, \\
Q_h^{p+1} & := \{q: \Gamma \to \mathbb{R} \,|\, \exists w \in S_h^{p} \cap H^1_0(\Omega) \text{ such that } q = w|_{\Gamma}\}, \\
R_h^{p+1} &:= \{q: \Gamma \to \mathbb{R} \,|\, q |_E \in \mathcal{P}^{p+1}(E)\},
\end{aligned}
\end{equation}
with $P^p(U)$ denoting the space of polynomials of degree $p$ in each variable on the open set $U$. Using these finite element spaces, we do indeed have \eqref{optimalorder} provided that the exact solution $\bm{u}$ to \eqref{ultraweakshort} has sufficient regularity.

To refine the mesh, we also need an \emph{a posteriori} error estimator; fortunately, under certain conditions, it has been shown \cite{CDG14} that DPG has just such an error estimator available for general problems of the form \eqref{contproblem}. Firstly, we must find the discrete Riesz representative $\tilde{\xi}_h \in \widetilde{V}_h$ of the residual which satisfies
\begin{equation}
\begin{aligned}\label{aposteriori}
(\tilde{\xi}_h, \tilde{v}_h)_V = l(\tilde{v}_h) - B(u_h, \tilde{v}_h) \qquad \forall \tilde{v}_h \in \widetilde{V}_h,
\end{aligned}
\end{equation}
where $\widetilde{V}_h$ is the enriched test space and $(\cdot,\cdot)_V$ is the localizable inner-product which induces the test norm $||\cdot||_V$. Obviously since the enriched test space is broken, the global problem \eqref{aposteriori} may be reduced to a local problem on each element. After it has been computed, $||\tilde{\xi}_h||_V$ provides an \emph{a posteriori} error bound for the error $||{u}-{u}_h||_U$ and $||\tilde{\xi}_h \,|_K||_V$ may be used as a refinement indicator for the element $K$. In all of our numerical experiments, we start from a coarse grid and allow the indicator $||\tilde{\xi}_h \,|_K||_V$ to drive adaptivity; specifically, the top 10\% of all elements as ranked by the indicator are refined on each cycle.

During the course of running the numerical experiments, we observed that the norm
$$||\bm{v}||_{V}  = \sqrt{\varepsilon||\nabla \bigcdot \bm{\tau} - {\bf a} \bigcdot  \nabla v||^2+\varepsilon^{-1}||\bm{\tau} + \varepsilon \nabla v||^2 + \varepsilon||v||^2 + \varepsilon||\nabla v||^2},$$
suffers from the same problems that plague the  standard quasi-optimal test norm {\bf --} we end up with refinement around the inflow boundary of the domain despite there being no boundary layers present there. We fix this in the same way as \cite{C13, CHBTD14}; namely, we introduce a mesh-dependent parameter $\displaystyle C_{\bm{\tau}} |_K := \min\{1/\sqrt{\varepsilon}, 1/\sqrt{|K|}\}$, $K \in \mathcal{T}$.  When attached to the term corresponding to the gradient $\bm{\sigma}$, viz.,
$$||\bm{v}||_{V}  = \sqrt{\varepsilon||\nabla \bigcdot \bm{\tau} - {\bf a} \bigcdot  \nabla v||^2+||C_{\bm{\tau}}(\bm{\tau} + \varepsilon \nabla v)||^2 + \varepsilon||v||^2 + \varepsilon||\nabla v||^2},$$
this issue seems to disappear for moderately small $\varepsilon$. Of course, the analysis conducted in the previous section is no longer valid for the new mesh-dependent test norm. However, a careful analysis reveals that not much changes; indeed, we see that only robustness of the gradient term $\bm{\sigma}$ is affected by a factor of $\displaystyle \max_{K \in \mathcal{T}} \,C_{\bm{\tau}}^{-2}/\varepsilon$ but as $h \to 0^+$, $\displaystyle \max_{K \in \mathcal{T}} \,C_{\bm{\tau}}^{-2}/\varepsilon \to 1^+$ and hence robustness is restored once the mesh is sufficiently refined.

We consider three different numerical experiments. Example 1 will focus on ensuring the order of convergence and that the proposed test norm performs well when used to drive mesh adaption. The second two examples will be used to compare our test norm with the test norm  
\begin{equation}
\begin{aligned}\notag
||\bm{v}||_{V,\, \text{MD}} = \sqrt{||C_v v||^2 + \varepsilon||\nabla v||^2 + ||{\bf a} \bigcdot \nabla v||^2 + ||C_{\bm{\tau}} \bm{\tau}||^2 + ||\nabla \bigcdot \bm{\tau}||^2},
\end{aligned}
\end{equation}
from \cite{C13, CHBTD14}.

All of the simulations in this paper are based on the deal.II finite element library \cite{BHK07}.

\subsection{Example 1}

For our first example, we consider a problem from \cite{SZ09} which is also considered as a time-dependent variant in  \cite{CGM14}. We set $\Omega = (0,1)^2$, $u|_{\partial \Omega} = 0$, ${\bf a} = (1,1)^T$ and $f$ is chosen such that the solution to problem \eqref{model_primal} is given by
$$u(x,y) = \bigg(\frac{e^{(x-1)/\varepsilon}-1}{e^{-1/\varepsilon}-1}+x-1\bigg)\!\bigg(\frac{e^{(y-1)/\varepsilon}-1}{e^{-1/\varepsilon}-1}+y-1 \bigg).$$
The solution has boundary layers of width $\mathcal{O}(\varepsilon)$ around the outflow boundary of the domain.

To begin, we run the adaptive algorithm driven by the \emph{a posteriori} error estimator to around one million degrees of freedom for the diffusion coefficients $\varepsilon = 1, \,10^{-2}, \,10^{-3},\, 10^{-4}$ and the polynomial degrees $p = 0,\, 1, \,2,\, 3$. We observe that the estimator $\eta$, the $L^2$ solution error $||u - u_h||$ and the $\varepsilon$-scaled $L^2$ gradient error $\varepsilon||\bm{\sigma} - \bm{\sigma}_h||$ all converge with optimal order once the boundary has been sufficiently refined although we were unable to confirm this for $\varepsilon = 10^{-4}$ due to reaching the asymptotic refinement regime only near the end of the computation. The convergence results for $\varepsilon = 0.01$ and the various different polynomial degrees are displayed in Figure \ref{example1convergence}.

Next, we plot the ratio of the $L^2$ solution error $||u - u_h||$ to the $\varepsilon$-scaled $L^2$ gradient error $\varepsilon||\bm{\sigma} - \bm{\sigma}_h||$ in Figure \ref{example1ratios} for $p = 3$ and the different values of $\varepsilon$. Given that the scaling on these norms is the same relative scaling present in the trial norm \eqref{unorm} and that our test norm is designed for robustness in this trial norm, we expect that these two errors should be linked independently of $\varepsilon$. We do indeed observe this asymptotically; in fact, the ratio is an almost perfect value of one for $\varepsilon = 10^{-2}$ and  $\varepsilon = 10^{-3}$. In the preasymptotic regime, this ratio is large but the results are unreliable as the exact solution $u$ contains boundary layers meaning the error values, computed using quadrature, will not be accurate until the boundary layers are sufficiently refined. 

Although this is not the focus of the paper, for interest, we also plot the ratio of the error estimator $\eta$ to the $L^2$ solution error $||u - u_h||$  for $p = 3$ and the different $\varepsilon$ values in Figure \ref{example1ratios}. The results show that once the boundary has been sufficiently refined this ratio is one for $\varepsilon = 10^{-2}$ and $\varepsilon = 10^{-3}$  indicating asymptotic parity of the $L^2$ solution error and the estimator for this problem.

\begin{figure}[h]
\centering
\includegraphics[scale=0.262]{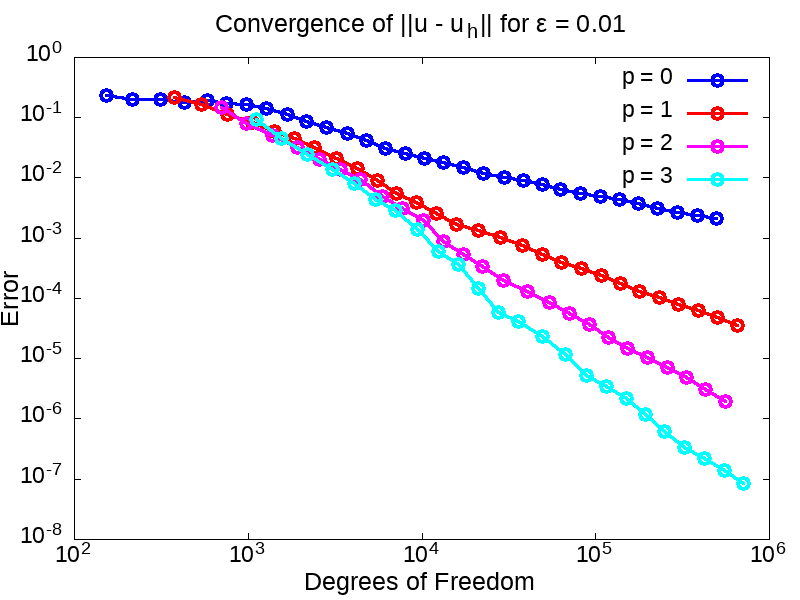} \includegraphics[scale=0.262]{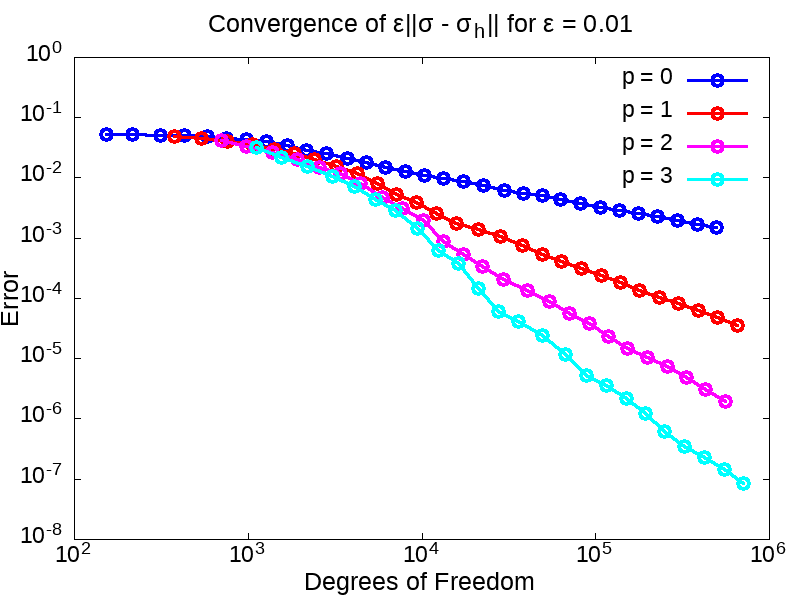}
\caption{Example 1: Convergence results for $\varepsilon = 0.01$.}
\label{example1convergence}
\end{figure}

\begin{figure}[h]
\centering
\includegraphics[scale=0.262]{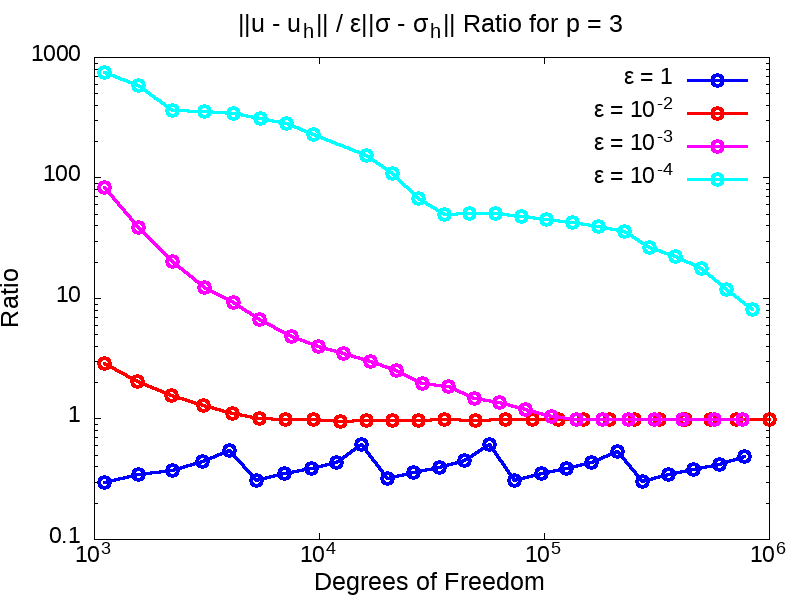} \includegraphics[scale=0.262]{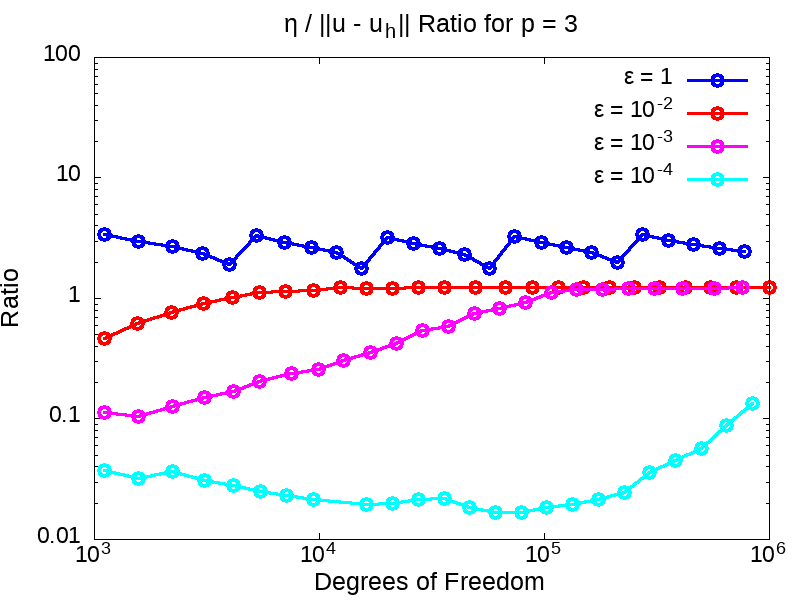}
\caption{Example 1: Error ratios for $p = 3$.}
\label{example1ratios}
\end{figure}
\begin{figure}[h]
\centering
\includegraphics[scale=0.325]{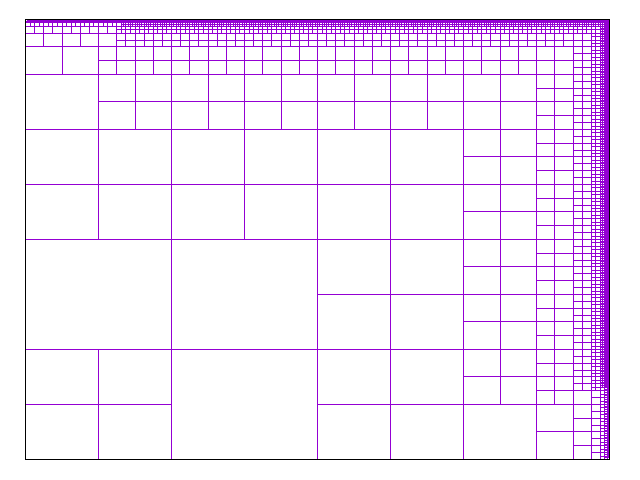} \includegraphics[scale=0.325]{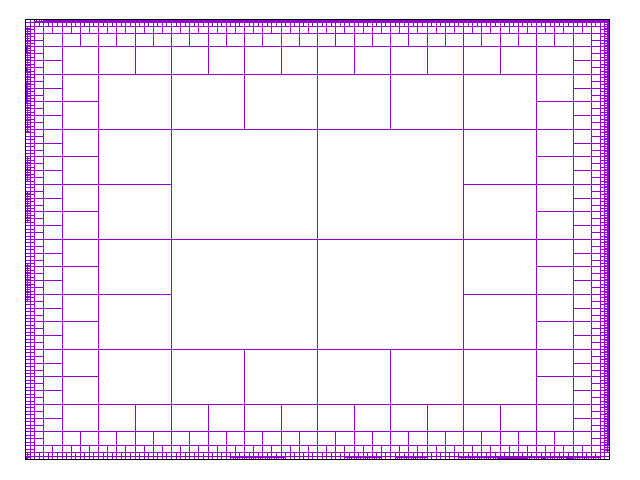}
\caption{Example 1: Meshes at final runtime for $\varepsilon = 10^{-3}$ (left) and $\varepsilon = 10^{-4}$ (right).}
\label{example1grids}
\end{figure}

We display the grids from final runtime for $p = 3$ and $\varepsilon = 10^{-3}, \, 10^{-4}$ in Figure \ref{example1grids}. The results show that our proposed test norm performs well for $\varepsilon = 10^{-3}$ but for $\varepsilon = 10^{-4}$ even the mesh-dependent modification made to the test norm is insufficient to stop unnecessary refinement around the inflow boundary in the pre-asymptotic regime despite no boundary layers being present.

\subsection{Example 2}
For this example, we consider the so-called Eriksson-Johnson model problem \cite{EJ93} thus we set $\Omega = (0,1)^2$, ${\bf a} = (1,0)^T$, $f = 0$ and consider the following boundary conditions:
\begin{equation}
\begin{aligned}\notag
u(0,y) & = u_0(y),  \quad\qquad && y \in (0,1), \\
u(1,y) & = 0, && y \in (0,1), \\
({\bf a}u - \varepsilon \nabla u) \bigcdot \bm{n}(x,0) & =  0, \qquad && x \in (0,1),\\
({\bf a}u - \varepsilon \nabla u) \bigcdot \bm{n}(x,1) & = 0, \qquad && x \in (0,1).
\end{aligned}
\end{equation}
The solution to this problem has a boundary layer of width $\mathcal{O}(\varepsilon)$ at the outflow boundary. Using separation of variables, this problem has the exact solution
\begin{equation}
\begin{aligned}\notag
u(x,y) = \sum_{n=0}^{\infty} C_n \frac{\exp(r_2(x-1)) - \exp(r_1(x-1))}{\exp(-r_2)-\exp(-r_1)}\cos(n\pi y),
\end{aligned}
\end{equation}
where
\begin{equation}
\begin{aligned}\notag
r_{1,\,2} = \frac{1 \pm \sqrt{1 + 4\varepsilon \lambda_n}}{2 \varepsilon}, \qquad\qquad\qquad \lambda_n = n^2 \pi^2 \varepsilon, 
\end{aligned}
\end{equation}
and
\begin{equation*}
C_n = \begin{cases}\displaystyle \int_0^1 u_0(y) \dif y \qquad & \text{if } n = 0,
 \\ \displaystyle 2\int_0^1 u_0(y)\cos(n \pi y) \dif y \qquad & \text{otherwise.} \end{cases}
\end{equation*}

\begin{figure}[h]
\centering
\includegraphics[scale=0.382]{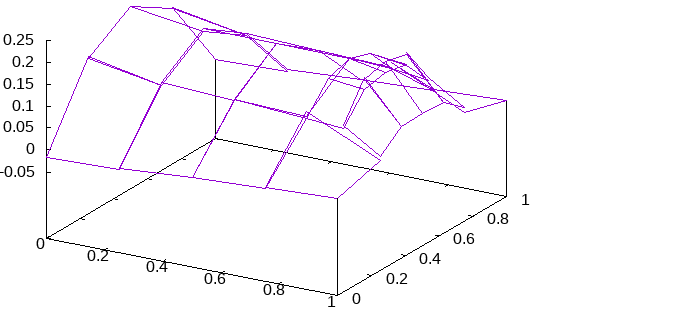}\hspace{-18mm} \includegraphics[scale=0.382]{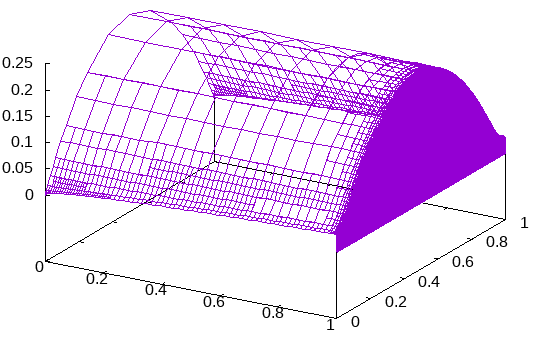}
\caption{Example 2: Numerical solution for $\varepsilon = 10^{-3}$ in the pre-asymptotic (left) and asymptotic (right) mesh refinement regimes.}
\label{example2solutions}
\end{figure}
\vspace{-4mm}
\begin{figure}[h]
\centering
\includegraphics[scale=0.325]{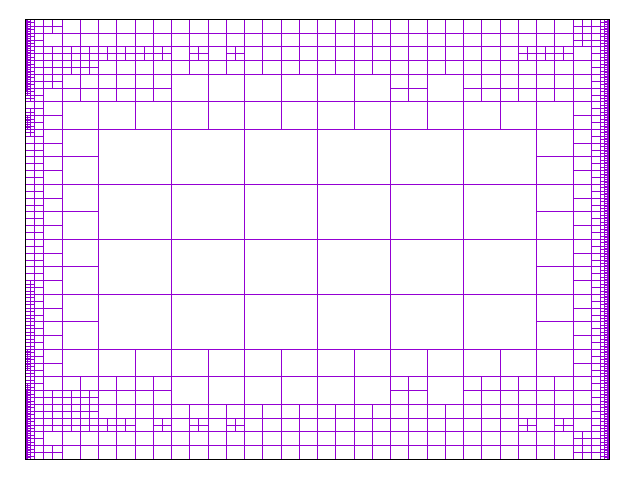} \includegraphics[scale=0.325]{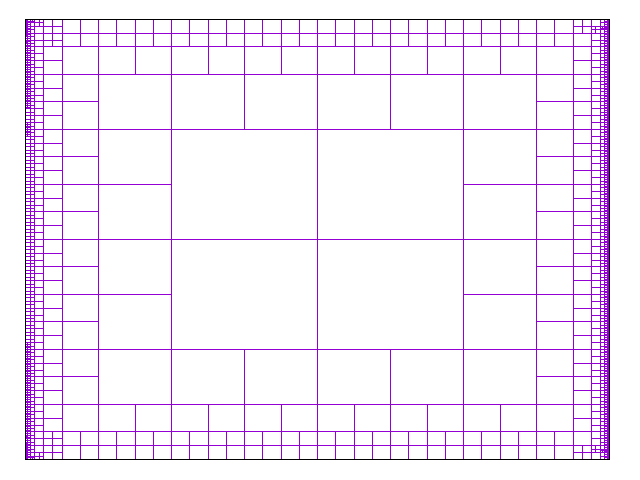}
\caption{Example 2: Meshes at around one million degrees of freedom for $\varepsilon = 10^{-4}$ under the test norm $||\cdot||_{V, \,\text{MD}}$ (left) and our test norm $||\cdot||_V$ (right).}
\label{example2grids}
\end{figure}

We will use the function $u_0(y) = y(1-y)$ for the inflow boundary condition. The numerical solution for $\varepsilon = 10^{-3}$ and $p = 3$ in the pre-asymptotic and asymptotic mesh refinement regimes is plotted in Figure \ref{example2solutions}. We see that the numerical solution is very stable even with barely any elements present and that, again, the estimator using our proposed test norm correctly picks up and refines the boundary layer.

Next, we compare results for the adaptive algorithm as driven by our test norm $||\cdot||_V$ versus the test norm $||\cdot||_{V,\, \text{MD}}$ from \cite{C13, CHBTD14}; in each case, adaptivity is driven by the respective test norm. For $\varepsilon = 10^{-4}$, we observe degeneration in mesh quality, plotted in Figure \ref{example2grids} (for $p$ = 3), for \emph{both} test norms with extraneous refinement around the inflow boundary despite no layers present. 

The $L^2$ solution errors $||u - u_h||$ for $\varepsilon = 10^{-2}$ and $\varepsilon = 10^{-3}$, $p = 2$ and $p = 3$ under the two different test norms $||\cdot||_V$ and $||\cdot||_{V,\, \text{MD}}$ are shown in Figure \ref{example2convergence}. The results show that our test norm performs slightly better for $\varepsilon = 10^{-2}$ while the test norm $||\cdot||_{V,\, \text{MD}}$ performs better for $\varepsilon = 10^{-3}$ although the difference in both cases is largely negligible indicating that, for this example, the two test norms perform more or less identically with respect to minimizing the $L^2$ solution error.

\begin{figure}[h]
\centering
\includegraphics[scale=0.262]{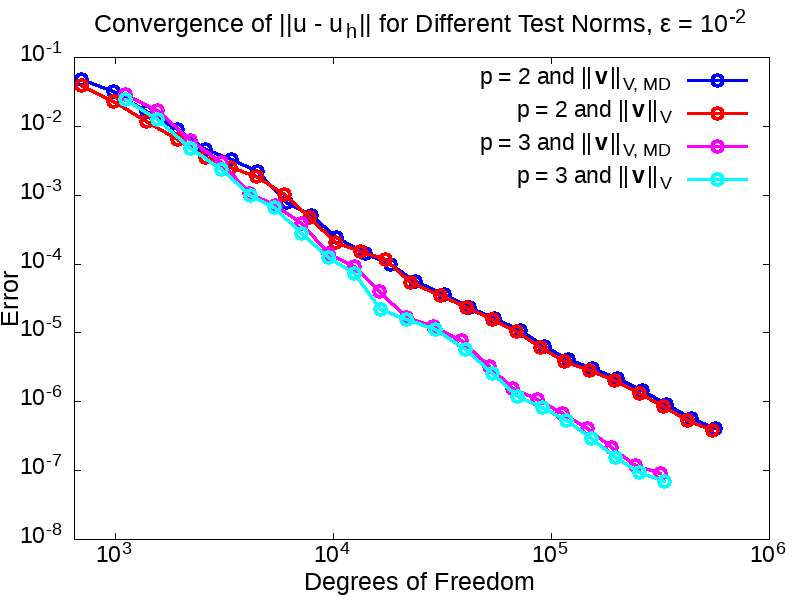} \includegraphics[scale=0.262]{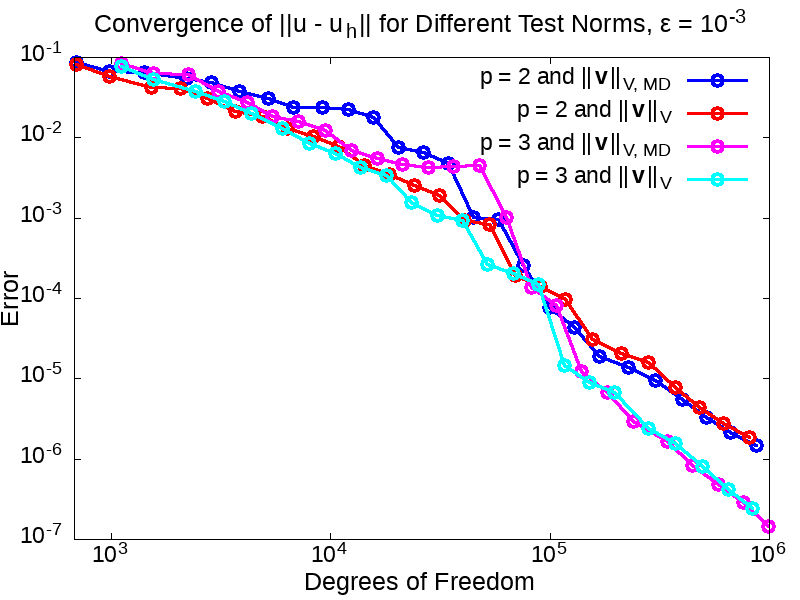}
\caption{Example 2: Convergence of the $L^2$ solution error $||u - u_h||$ under the two different test norms for $\varepsilon = 10^{-2}$ (left) and $\varepsilon = 10^{-3}$ (right).}
\label{example2convergence}
\end{figure}
\vspace{-4mm}
\begin{figure}[h]
\centering
\includegraphics[scale=0.325]{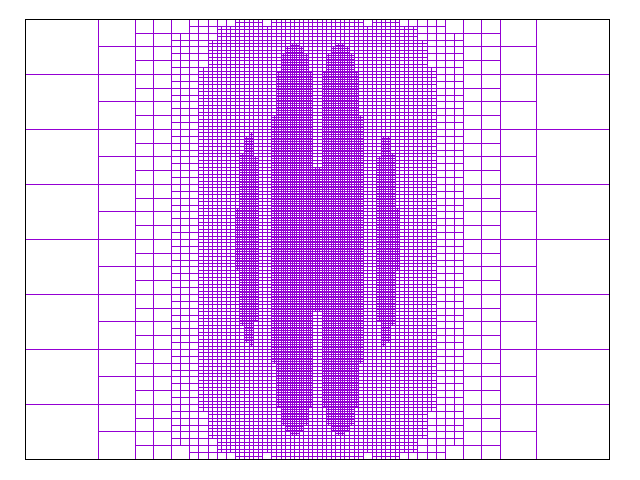} \includegraphics[scale=0.325]{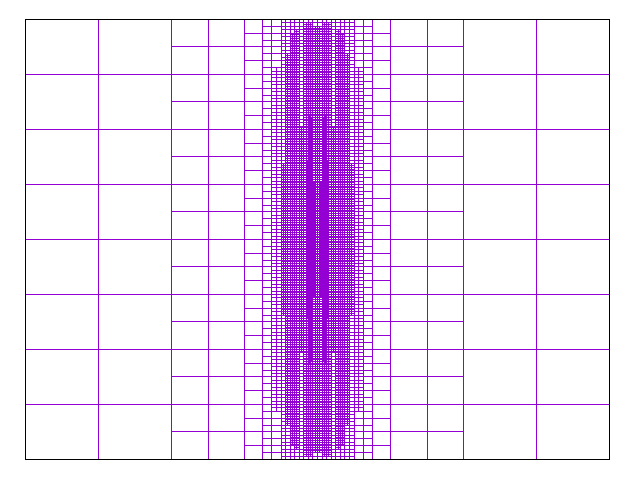}
\caption{Example 3: Meshes at final runtime for $\varepsilon = 10^{-2}$ (left) and $\varepsilon = 10^{-3}$ (right).}
\label{example3grids}
\end{figure}

\subsection{Example 3}

Here, we select another example from \cite{SZ09}. We set $\Omega = (-1,1)^2$ and consider the non-constant convection ${\bf a} = (x,y)^T$. The Dirichlet boundary conditions and right-hand side $f$ are then chosen such that the solution to \eqref{model_primal} is given by
$$u(x,y) = \text{erf}(x/\sqrt{2\varepsilon})(1-y^2).$$
The solution to this problem exhibits an interior layer of width $\mathcal{O}(\sqrt{\varepsilon})$. Meshes for $p = 3$ and $\varepsilon = 10^{-2}, \,10^{-3}$ are displayed in Figure \ref{example3grids} and clearly show that our test norm successfully picks up and refines the interior layer present in the solution.

As in Example 2, we compare results for the adaptive algorithm as driven by our test norm $||\cdot||_V$ versus the test norm $||\cdot||_{V,\, \text{MD}}$ from \cite{C13, CHBTD14}; in each case, adaptivity is driven by the respective test norm. The results given in Figure \ref{example3convergence} show that $L^2$ solution errors  $||u-u_h||$ under both test norms are robust with respect to $\varepsilon$. Our test norm outperforms the test norm $||\cdot||_{V,\, \text{MD}}$ for $\varepsilon = 10^{-2}$ with the situation reversed for $\varepsilon = 10^{-4}$, however, both test norms deliver near identical values for the $L^2$ solution error once the mesh has been sufficiently refined.

\begin{figure}[h]
\centering
\includegraphics[scale=0.262]{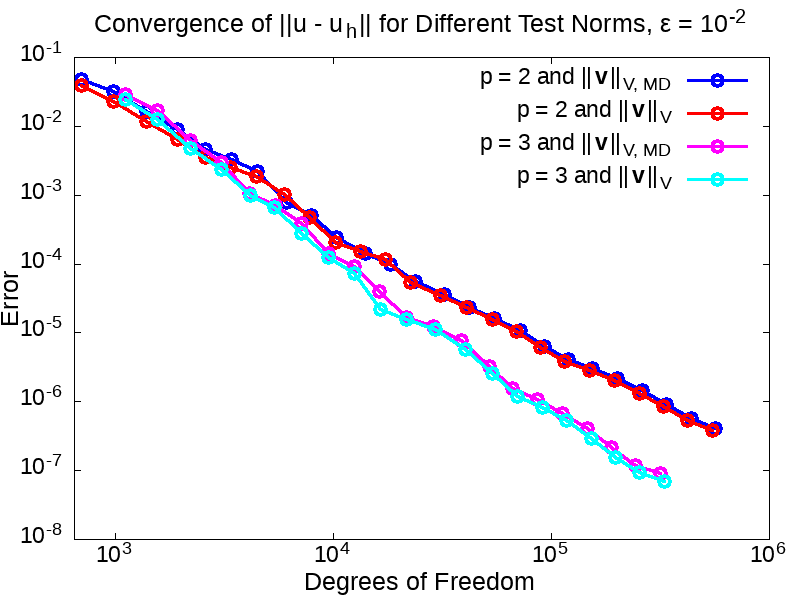} \includegraphics[scale=0.262]{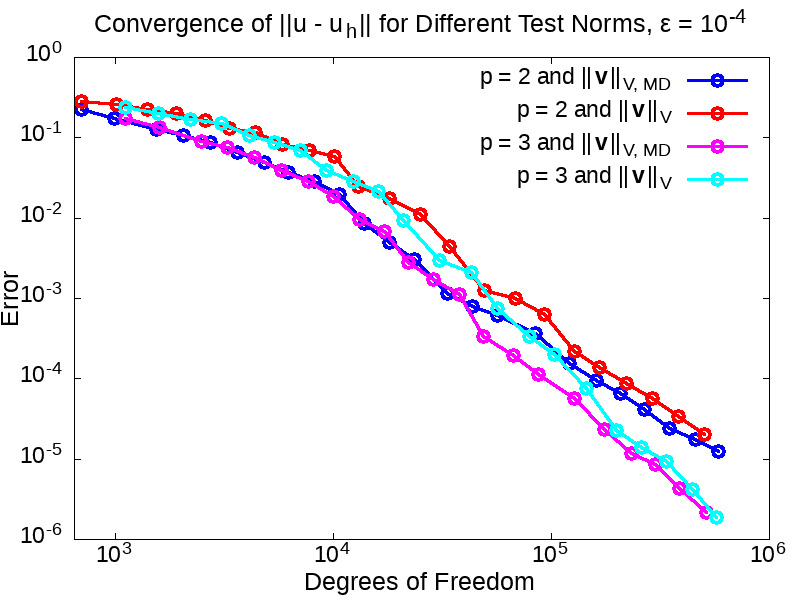}
\caption{Example 3: Convergence of the $L^2$ solution error $||u - u_h||$ under the two different test norms for $\varepsilon = 10^{-2}$ (left) and $\varepsilon = 10^{-4}$ (right).}
\label{example3convergence}
\end{figure}
\vspace{-3mm}
\section{Conclusions}

We proposed the mesh-dependent quasi-optimal test norm
$$||\bm{v}||_{V}  = \sqrt{\varepsilon||\nabla \bigcdot \bm{\tau} - {\bf a} \bigcdot  \nabla v||^2+||C_{\bm{\tau}}(\bm{\tau} + \varepsilon \nabla v)||^2 + \varepsilon||v||^2 + \varepsilon||\nabla v||^2},$$
$\displaystyle C_{\bm{\tau}} |_K = \min\{1/\sqrt{\varepsilon}, 1/\sqrt{|K|}\}$, $K \in \mathcal{T}$ for use in the DPG method based on the ultra-weak formulation of the convection-diffusion equation. We proved that this test norm is robust in the solution component and also robust in the gradient component once the mesh has been sufficiently refined; additionally, the proposed test norm was proven to have favorable scalings in the trace components. The robustness proof requires only minimal assumptions on the convection in contrast to similar results in the literature \cite{C13, CHBTD14}. 

Numerical experiments show that, when compared with the mesh-dependent test norm from \cite{C13, CHBTD14}, our proposed test norm was competitive delivering near identical $L^2$ solution errors and producing similar meshes. The numerics imply that quasi-optimal test norms, when appropriately augmented with mesh-dependent terms and scaled with $\varepsilon$, can be competitive for the DPG method as applied to the convection-diffusion equation. Nevertheless, we believe that there is still much work to be done on this topic as none of the test norms in the literature (including our proposed test norm) performed well in all respects for $\varepsilon \leq \, \approx10^{-4}$.

\bibliographystyle{amsplain}
\bibliography{paper}
\end{document}